\newcommand{\C}{\mathbb{C}}
\newcommand{\N}{\mathbb{N}}
\newcommand{\R}{\mathbb{R}}
\newcommand{\Z}{\mathbb{Z}}
\newcommand{\cfct}{\mathbf{c}}
\newcommand{\dfct}{\mathbf{d}}
\newcommand{\romannum}{\renewcommand{\labelenumi}{\textnormal{(\roman{enumi})}}}
\newcommand{\bra}{\left<\alpha\right>}
\newcommand{\brb}{\left<\beta\right>}
\newtheorem{theorem}{Theorem}[section]
\newtheorem{proposition}[theorem]{Proposition}
\newtheorem{lemma}[theorem]{Lemma}
\theoremstyle{definition}
\newtheorem{remark}[theorem]{Remark}       
\newtheorem{definition}[theorem]{Definition}  
\title[Non-Integrable Multipliers for the Jacobi Transform]%
{On a Class of Non-Integrable Multipliers for the Jacobi Transform}
\author{Troels Roussau Johansen}
\date{\today}
\address{Mathematisches Seminar\\
   Christian-Albrechts Universit\"at zu Kiel\\
   Ludewig-Meyn-Strasse 4, D-24098 Kiel\\
   Germany}
\email{johansen@math.uni-kiel.de}
\keywords{Jacobi transform, multipliers, non-integrable kernel, asymptotic
expansion, transference} \subjclass[2010]{Primary: 44A35; secondary: 20N20,
33C05, 34E05, 42A45}
\begin{document}

\begin{abstract}
We show that a bounded function $m$ on $\R$ not necessarily integrable at infinity may still yield $L^p$-bounded convolution operators for the Jacobi transform if the nontangential boundary values of $\omega\cdot m$ along the edges of a certain strip in $\C$ yield Euclidean Fourier multipliers, where $\omega$ is a function of the form $\omega(\lambda)=(\lambda^2+4\rho^2)^{\alpha+1/4}$. This partially generalizes similar results by Giulini, Mauceri, and Meda (on rank one symmetric spaces) and Astengo (on Damek--Ricci spaces).
\end{abstract}

\maketitle

\section{Introduction and statement of result}
The study of translation invariant operators has played a decisive role in the development of Euclidean harmonic analysis, as evidenced, for example, by the landmark paper \cite{Hormander-translation} by H\"ormander. A close connection between said translation invariant operators, the Fourier transform, and distributions was uncovered, as such operators turned out to be Fourier multiplier operators, or, what amounts to the same thing, convolution operators with suitable kernels. It didn't take long for the experts to seek new venues for their inquiries. One of the first was the important paper \cite{ClercStein}, where exciting non-Euclidean phenomena were uncovered, in the setting of noncommutative harmonic analysis on a noncompact symmetric space.

Let us specialize to the rank one situation for the moment and define $\Omega_p=\{\lambda\in\C\,:\,\vert\mathrm{Im} \lambda\vert < \vert 2/p-1\vert\rho\}$, where $\rho$ is a certain constant associated with the symmetric space $G/K$ (half the sum of positive roots, see Section \ref{sec.Jacobi}).  Clerc and Stein observed that an $L^p$-multiplier for the so-called spherical transform of a Riemannian symmetric space of the noncompact type always has a holomorphic extension to the strip $\Omega_p$. Several multiplier results followed the publication of \cite{ClercStein} and while we cannot adequately recount the complete literature, let us at least mention \cite{Stanton-Tomas} (the rank one case) and \cite{Anker-Annals} for the general rank case (where the strip $\Omega_p$ is replaced by a tube domain $T_p$ over a certain cone in the dual of the Lie algebra of the Iwasawa-group $A$ in $G$).  More recent advances include \cite{Ionescu}, as well as \cite{BloomXu}.  The latter establishes the results from \cite{Anker-Annals} in the context of {C}h\'ebli-{T}rim\`eche hypergroups (which subsumes the spherical analysis on a rank one symmetric space, and more generally the Jacobi analysis we are dealing with).

It is well-known that an $L^p$-multiplier $m$ for the spherical transform on $G/K$ is determined by its boundary value on the edge on $\Omega_p$, and Anker showed that if this boundary value satisfies a Mihlin--H\"ormander condition of sufficiently high order, then the function is an $L^p$-multiplier. A multiplier result with less restrictive assumptions on the multiplier was obtained in \cite{Giulini-Mauceri-Meda.Crelle} (and generalized to Damek--Ricci spaces in \cite{Astengo}), and it is the purpose of the present paper to establish a `spherical' counterpart to both papers in the context of Jacobi analysis. Giulini et.al. observed that there exists a function $\omega$, holomorphic and non-vanishing in a neighborhood of $\Omega_1$ such that $m$ is still a multiplier if merely the nontangential boundary value of $\omega m$ satisfies Mihlin--H\"ormander conditions, so a class of multipliers \emph{larger} than the one considered by Anker is thereby allowed. Additional remarks are to be found in the Introduction and Section 2 of \cite{Giulini-Mauceri-Meda.Crelle}. In essence $m$ is allowed to be less regular at infinity, in particular be non-integrable. This extension was not investigated in \cite{BloomXu} but our results generalize to that setting.

The precise formulation is as follows. Let $\omega(\lambda)=(\lambda^2+4\rho^2)^{\alpha+1/4}$.

\begin{theorem}\label{thm.multi-edge}
Let $m$ be an even, holomorphic function on $\Omega_1$. If $\omega m$ is
bounded on $\Omega_1$ and its nontangential boundary value $(\omega m)_\rho$ at the upper boundary line $\{\lambda+i\rho\}$ of $\Omega_1$ belongs to $\mathcal{M}_p(\R)$ for
some $p\in(1,\infty)$, then $m$ is an $L^p$-multiplier for the Jacobi
transform, and there exists a finite constant $c$ such that
$\|m\|_{\mathcal{M}_p}\leq c\|(\omega m)_\rho\|_{\mathcal{M}_p(\R)}$.
\end{theorem}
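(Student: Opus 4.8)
The plan is to realize $T_m$ as a convolution operator on the Jacobi hypergroup $(\R_+,\Delta(t)\,dt)$, where $\Delta$ is the Jacobi weight (so $\Delta(t)\sim c\,e^{2\rho t}$ as $t\to\infty$), with kernel $k_m$ the inverse Jacobi transform of $m$, and then to split $k_m$ into a piece supported near the origin and a piece at infinity. The piece at infinity will be handled by transference to a Euclidean multiplier, and the local piece by local Euclidean multiplier theory; the whole point of $\omega(\lambda)=(\lambda^2+4\rho^2)^{\alpha+1/4}$ is that it matches the order of growth of $c(-\lambda)^{-1}$ along the edge $\{\xi+i\rho\}$, so that after a contour shift the tail of $k_m$ becomes $e^{-2\rho t}$ times the inverse Euclidean Fourier transform of something built from $(\omega m)_\rho$ by multiplying by bounded symbols. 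First I would record the kernel: inserting the Harish--Chandra expansion $\varphi_\lambda=c(\lambda)\Phi_\lambda+c(-\lambda)\Phi_{-\lambda}$ into the inversion formula and using that $m$ is even and $|c(\lambda)|^{-2}=(c(\lambda)c(-\lambda))^{-1}$ on $\R$ gives, for $t>0$, $k_m(t)=c\int_\R m(\lambda)c(-\lambda)^{-1}\Phi_\lambda(t)\,d\lambda$, interpreted by analytic continuation since $m$ is only bounded. Because $m$ and $c(-\lambda)^{-1}$ are holomorphic on a neighbourhood of $\{0\le\mathrm{Im}\lambda\le\rho\}$ and $\omega m$ is bounded on $\Omega_1$, Cauchy's theorem together with a Phragm\'en--Lindel\"of estimate on vertical segments (using $\Phi_\lambda(t)\sim e^{(i\lambda-\rho)t}$) lets me push the contour up to $\mathrm{Im}\lambda=\rho$, picking up at most finitely many residues from the discrete spectrum, each an exponentially decaying bounded kernel. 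Note that $\omega$ is holomorphic and non-vanishing on $\overline{\Omega_1}$ since $\pm2i\rho\notin\overline{\Omega_1}$, so $\|m\|_{L^\infty(\Omega_1)}\le c\,\|(\omega m)_\rho\|_{L^\infty(\R)}$, and $\|(\omega m)_{-\rho}\|_{\mathcal{M}_p(\R)}=\|(\omega m)_\rho\|_{\mathcal{M}_p(\R)}$ by the evenness of $\omega m$.

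The shift produces $e^{2\rho t}k_m(t)=c\int_\R(\omega m)_\rho(\xi)\,b(\xi,t)\,e^{i\xi t}\,d\xi$ (up to the residue terms), where $b(\xi,t)=e^{2\rho t}e^{-i\xi t}\Phi_{\xi+i\rho}(t)/(\omega(\xi+i\rho)c(-\xi-i\rho))$. Using the Harish--Chandra series $\Phi_{\xi+i\rho}(t)=e^{(i\xi-2\rho)t}\sum_{n\ge0}\Gamma_n(\xi+i\rho)e^{-2nt}$ we get $b(\xi,t)=\sum_{n\ge0}b_n(\xi)e^{-2nt}$ with $b_n(\xi)=\Gamma_n(\xi+i\rho)/(\omega(\xi+i\rho)c(-\xi-i\rho))$. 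The key analytic input, which I would take from the asymptotic estimates established earlier in the paper, is that the exponent $\alpha+1/4$ is chosen exactly so that each $b_n$ is a bounded function on $\R$ satisfying Mihlin bounds $|b_n^{(k)}(\xi)|\le C_k(1+n)^{N}(1+|\xi|)^{-k}$ for $k=0,1$, with polynomial growth in $n$. Hence each $b_n$ is a Euclidean Mihlin multiplier, and more importantly $(\omega m)_\rho\,b_n\in\mathcal{M}_p(\R)$ with $\|(\omega m)_\rho\,b_n\|_{\mathcal{M}_p(\R)}\le c\,(1+n)^{N}\|(\omega m)_\rho\|_{\mathcal{M}_p(\R)}$, since multiplying an $\mathcal{M}_p$ function by a Mihlin multiplier preserves $\mathcal{M}_p$ with controlled norm.

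Now write $T_m=T_m^0+T_m^\infty$, with kernels $k_m$ restricted to $\{|t|<R\}$ and $\{|t|\ge R\}$ (and even in $t$), for a fixed $R>0$. On $\{t\ge R\}$ the Harish--Chandra series converges uniformly, so $e^{2\rho t}k_m^\infty(t)=\sum_{n\ge0}e^{-2nt}\,\mathcal{F}^{-1}_{\mathrm{eucl}}\big((\omega m)_\rho b_n\big)(t)$; invoking the transference principle for the Jacobi transform — convolution on $(\R_+,\Delta\,dt)$ against $e^{-2\rho t}$ times a Euclidean $\mathcal{M}_p$-kernel is bounded on $L^p(\Delta)$ with comparable norm, this being the hypergroup form of the Clerc--Stein/Stanton--Tomas/Anker reduction — term by term, and summing using $\sum_n e^{-2nR}(1+n)^N<\infty$, gives $\|T_m^\infty\|_{\mathcal{M}_p}\le c\,\|(\omega m)_\rho\|_{\mathcal{M}_p(\R)}$. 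For $T_m^0$, I would use that near the origin the Jacobi convolution structure is a (perturbed) Hankel structure on an interval, so $T_m^0$ is controlled by a \emph{local} Euclidean multiplier norm of $m|_\R$; and since $m$ is holomorphic and bounded on the \emph{fixed-width} strip $\Omega_1$, Cauchy's estimates on discs of radius comparable to $\rho$ yield $|m^{(k)}(\xi)|\le C_k\|m\|_{L^\infty(\Omega_1)}$ for all $\xi\in\R$ and all $k$, which is a local Mihlin condition; with $\|m\|_{L^\infty(\Omega_1)}\le c\,\|(\omega m)_\rho\|_{\mathcal{M}_p(\R)}$ this gives $\|T_m^0\|_{\mathcal{M}_p}\le c\,\|(\omega m)_\rho\|_{\mathcal{M}_p(\R)}$. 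Adding the two pieces together with the finitely many exponentially decaying residue terms (controlled by $\|m\|_{L^\infty}$) completes the argument and yields the claimed norm bound.

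The step I expect to be the main obstacle is the uniform transference for $T_m^\infty$: proving that Jacobi convolution against the tail kernel is $L^p(\Delta)$-bounded by a Euclidean multiplier norm with a constant that does not grow faster than $e^{2nR}$ when the $n$-th term of the Harish--Chandra series is peeled off. This is precisely where the fine asymptotics of $c(-\lambda)^{-1}$, of the coefficients $\Gamma_n$, and of $\Phi_\lambda$ along and near the edge $\mathrm{Im}\lambda=\rho$ must be used quantitatively, where the exact exponent $\alpha+1/4$ is forced, and where the hypergroup analogue of the symmetric-space transference lemmas has to be set up carefully (including verifying that the residual discrete-spectrum terms, when present, do not spoil the estimate).
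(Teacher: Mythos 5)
Your treatment of the global part follows the same route as the paper (Harish--Chandra expansion, contour shift to the edge $\mathrm{Im}\,\lambda=\rho$, Gangolli-type bounds for $\Gamma_n$ on the edge, Gigante-style hypergroup transference), and incidentally no residues appear there, since $\cfct(-\lambda)^{-1}\Gamma_k(\lambda)$ is holomorphic for $\mathrm{Im}\,\lambda>-\rho$. The genuine gap is in your local part. You claim that $T_m^0$ is controlled by a ``local Euclidean multiplier norm of $m|_\R$'' and that Cauchy's estimates on discs of radius comparable to $\rho$ give a ``local Mihlin condition.'' Cauchy's estimates only yield $|m^{(k)}(\xi)|\le C_k\rho^{-k}\|m\|_{L^\infty(\Omega_1)}$, i.e.\ \emph{bounded} derivatives, not the Mihlin decay $(1+|\xi|)^{-k}$; bounded symbols with all derivatives bounded are not in general $L^p$ multipliers for $p\ne 2$. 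More to the point, localizing the kernel in $t$ does not localize the symbol in $\lambda$: the piece of $k$ near $t=0$ is governed by the behaviour of $m(\lambda)\vert\cfct(\lambda)\vert^{-2}$ as $\lambda\to\infty$. After the Bessel expansion of $\varphi_\lambda$, the leading local term is (up to harmless factors) $\psi(t)t^{-\alpha-\frac12}$ times the Euclidean Fourier transform of $m(\lambda)(1-\Phi(\lambda))\vert\lambda\vert^{-\alpha-\frac12}\vert\cfct(\lambda)\vert^{-2}\approx M(\lambda)P_{\alpha+\frac12}(\lambda)$ with $M=m\,\cfct(-\cdot)^{-1}$; this kernel is \emph{not} integrable, and its $\mathcal{CO}_p$ bound genuinely uses that $M$ is itself a Euclidean $L^p$ multiplier (product with the Mihlin symbol $P_{\alpha+\frac12}$, the $A_p(\R)$ argument, then transference). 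Boundedness of $m$ on the fixed strip cannot replace this; if it could, the edge hypothesis would only be needed at infinity and the theorem would be much stronger than the Giulini--Mauceri--Meda result it generalizes.

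This exposes the second missing ingredient: before either half can be run, one must show that $M$ and $M_\rho$ belong to $\mathcal{M}_p(\R)$. In the paper this is done by complex interpolation of the analytic family attached to the compressed strips $\widetilde{\Omega}_{z\rho}$ (evenness of $\omega m$ supplies the lower edge), which gives $(\omega m)_{z\rho}\in\mathcal{M}_p(\R)$ for $-1<z<1$, followed by multiplication with $\mathbf{w}=\omega^{-1}\cfct^{-1}$, which satisfies H\"ormander estimates. Your proposal never produces this interior multiplier information, yet it is needed not only locally but also in the $n=0$ term of your own global decomposition: restricting the edge kernel $\mathcal{F}(M_\rho)$ to $\{|t|\ge R\}$, together with the one-sided indicators and the non-constant factor $\delta_{\alpha,\beta}$ (present exactly because $\alpha,\beta$ are not half-integers, so $\Delta(t)\sim ce^{2\rho t}$ is too crude), produces correction terms of the form $\eta_\pm\mathcal{F}(M)$ and $\eta_\pm\mathcal{F}(H)$, which require $M\in\mathcal{M}_p(\R)$ and $\eta_\pm\in A_p(\R)$; your term-by-term summation treats the $n=0$ cutoff as harmless, and for $n\ge1$ the $A_p$-norms of the cutoff factors (estimated in the paper via $W^{1,1}$, with a careful treatment of $\delta_{\alpha,\beta}'$) are taken for granted. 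In short: the architecture of your global argument is sound and close to the paper's, but the local argument as proposed would fail, and the interpolation step that converts the edge hypothesis into interior multiplier information is indispensable and absent.
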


Here we adhere to the following notation and terminology: Denote by $\mathcal{M}_p(\R)$ the space of Euclidean multipliers and by $\mathcal{M}_p$ the space of Jacobi multipliers. The \emph{multiplier norm} of a function $m$ is by convention the operator norm of $f\mapsto\mathcal{F}^{-1}(\mathcal{F}f\cdot m)$ acting on $L^p(\R)$, and similarly for the Jacobi multipliers. These choices of norm turn $\mathcal{M}_p(\R)$ and $\mathcal{M}_p$ into Banach spaces. Let $d\mu(t)=(2\sinh t)^{2\alpha+1}(2\cosh t)^{2\beta+1}dt$ (the significance of this measure is explained in Section~\ref{sec.Jacobi}) and denote by $\mathcal{CO}_p^q(d\mu)$ the space of all linear operators that map boundedly from $L^p(d\mu)$ to $L^q(d\mu)$ and commute with (left) translation. The relevant translation is introduced in Equation \eqref{eqn.translation} below. We write $\mathcal{CO}_p$ instead of $\mathcal{CO}_p^p(d\mu)$, whereas the Euclidean analogue shall always be denoted by $\mathcal{CO}_p(\R)$. It is standard that every operator $T\in \mathcal{CO}_p^p(d\mu)$ has the form $Tf=k\star f$ for a unique, suitable function $k$, and where $\star$ is a suitable convolution (see Equation~\eqref{eq.convolution}). By a slight abuse of terminology we say that a function $k$ belongs to $\mathcal{CO}_p^q(d\mu)$ if the associated convolution operator $f\mapsto k\star f$ is $L^p-L^q$ bounded, hence in $\mathcal{CO}_p^q(d\mu)$.
\smallskip

The proof will follow closely the approach in \cite{Giulini-Mauceri-Meda.Crelle} and \cite{Astengo} with one crucial difference (and several smaller technical ones). We cannot use the Herz restriction principle, as we do not have any natural subgroups to which we restrict multipliers. In the present setup transference is the proper replacement, as was also utilized in both \cite{Stanton-Tomas} and \cite{Johansen-exp1}. The transference result is from \cite{Gigante} and it must be pointed out that the proof of the transference theorem is much more difficult than the version used in \cite{Stanton-Tomas}, where group-invariance of the convolution kernel may be exploited. An important realization is that the use of the Fig\`a-Talamanca--Herz algebra $A_p(\R)$ in \cite{Giulini-Mauceri-Meda.Crelle} is still permissible in the Jacobi setting, once we have transferred the analysis of the Jacobi multipliers to an Euclidean setting. We refer the reader to \cite{Cowling-Herz} for details on the Fig\`a-Talamanca--Herz algebra $A_p(\R)$ as well as the Herz restriction principle, and to \cite{CoifmanWeiss} for further details on transference.

Of a more technical level, we mention new $\cfct$-function estimates (necessitated by $\alpha,\beta$ not being half-integers), the details are summarized in Lemma \ref{lemma.precise-c}. Estimates involving the density $\Delta(t)$ also tend to become more complicated.
\bigskip

A word on notation: Error terms are always denoted by $E$ or $e$, sometimes with indices, like $E_1$ and $E_{1,1}$. This is not to imply that the different terms are somehow related, rather it is a matter of notational convenience. The notation $a\lesssim b$ is used as shorthand for an estimate of the form $a\leq cb$ for some constant $c$; this constant $c$ might change from line to line. We write out the actual constants if they are important for the conclusion.

\section{Jacobi Analysis}\label{sec.Jacobi}
In this section we briefly collect the pertinent definitions and facts relevant for Jacobi analysis. A much more detailed account can be found in \cite{Koornwinder-book}, for example. Let $(a)_0=1$ and
$(a)_k=a(a+1)\cdots (a+k-1)$. The hypergeometric function ${_2}F_1(a,b;c,z)$ is
defined by
\[{_2}F_1(a,b;c,z)=\sum_{k=0}^\infty\frac{(a)_k(b)_k}{(c)_kk!}z^k,\quad\vert z\vert<1;\]
the function $z\mapsto{_2}F_1(a,b;c,z)$ is the unique solution of the differential equation
\[z(1-z)u''(z)+(c-(a+b+1)z)u'(z)-abu(z)=0\]
which is regular in $0$ and equals $1$ there. The Jacobi function with
parameters $(\alpha,\beta)$ (which will assumed to be \emph{real}) is defined by
$\varphi_\lambda^{(\alpha,\beta)}(t)={_2}F_1(\frac{1}{2}(\alpha+\beta+1-i\lambda),
\frac{1}{2}(\alpha+\beta+1+i\lambda); \alpha+1,-\sinh^2t)$. For
$\vert\beta\vert<\alpha+1$, the system
$\{\varphi_\lambda^{(\alpha,\beta)}\}_{\lambda\geq 0}$ is a continuous
orthonormal system in $\R_+$ with respect to the weight
$\Delta_{\alpha,\beta}(t)=(2\sinh t)^{2\alpha+1}(2\cosh t)^{2\beta+1}$, $t>0$. Assume that $\alpha\neq -1,-2,\ldots$, $\alpha>\frac{1}{2}$, and $\alpha>\beta>-\frac{1}{2}$. The Jacobi-Laplacian is the operator
$\mathcal{L}=\mathcal{L}_{\alpha,\beta}=\frac{d^2}{dt^2}+((2\alpha+1)\coth
t+(2\beta+1)\tanh t)\frac{d}{dt}$, by means of which the Jacobi function
$\varphi_\lambda^{(\alpha,\beta)}$ may alternatively be characterized as the
unique solution to
\begin{equation}\label{eqn.eigeneqn}
\mathcal{L}_{\alpha,\beta}\varphi+(\lambda^2+\rho^2)\varphi=0
\end{equation}
on $\R_+$ satisfying $\varphi_\lambda(0)=1$ and $\varphi_\lambda'(0)=0$. It is
thereby clear that $\lambda\mapsto\varphi_\lambda(t)$ is analytic for all
$t\geq 0$. Moreover, for $\mathrm{Im}\,\lambda\geq 0$, there exists a unique
solution $\phi_\lambda$ to the same equation satisfying
$\phi_\lambda(t)=e^{(i\lambda-\rho)t}(1+o(1))$ as $t\to\infty$, and
$\lambda\mapsto\phi_\lambda(t)$ is therefore also analytic for $t\geq 0$.

In analogy with the case of symmetric spaces, one proceeds to show the existence of a function
$\cfct=\cfct_{\alpha,\beta}$ for which
$\varphi_\lambda(t)=\cfct(\lambda)e^{(i\lambda-\rho)t}\phi_\lambda(t)+\cfct(-\lambda)e^{(-i\lambda-\rho)t}\phi_{-\lambda}(t)$.
Since we adhere to the conventions and normalization used in
\cite{Koornwinder-FJ}, the $\cfct$-function is given by
\[\cfct(\lambda)=\frac{2^{\rho-i\lambda}\Gamma(i\lambda)\Gamma(\alpha+1)}
{\Gamma(\frac{1}{2}(\rho+i\lambda))\Gamma(\frac{1}{2}(\rho+i\lambda)-\beta)}.\]
Observe that for $\alpha,\beta\neq -1,-2,\ldots$, $\cfct(-\lambda)^{-1}$ has
finitely many poles for $\mathrm{Im}\,\lambda <0$ and none if $\mathrm{Im}\,\lambda\geq 0$ and $\mathrm{Re}\,\rho>0$. It follows from Stirling's formula that
for every $r>0$ there exists a positive constant $c_r$ such  that
\begin{equation}\label{eqn.c-fct.est}
\vert\cfct(-\lambda)\vert^{-1}\leq c_r(1+\vert\lambda\vert)^{\mathrm{Re}\,\alpha+\frac{1}{2}}\text{ if } \mathrm{Im}\,\lambda\geq 0\text{ and }
\cfct(-\lambda')\neq 0\text{ for }\vert\lambda'-\lambda\vert\leq r.
\end{equation}

\begin{lemma}\label{lemma.precise-c} Assume $\alpha>\beta>-\frac{1}{2}$.
 \begin{enumerate}
 \romannum
 \item For every integer $M$ there exist constants $c_i, i=0,\ldots,M-1$ (depending on $\alpha$, $\beta$, and $M$) such that
     \[\vert\cfct(\lambda)\vert^{-2}\thicksim c_0\vert\lambda\vert^{2\alpha+1}\Biggl\{1+\sum_{j=1}^{M-1}c_j\lambda^{-j}+O\bigl(\lambda^{-M}\bigl)\Biggr\}\text{ as } \vert\lambda\vert\to\infty.\]
\item Let $\dfct(\lambda)=\vert\cfct(\lambda)\vert^{-2}$,
$\lambda\geq 0$, and $k\in\N_0$. There exists a constant
$c_k=c_{k,\alpha,\beta}$ such that
\[\Bigl|\frac{d^k}{d\lambda^k}\dfct(\lambda)\Bigr|\leq
c_k(1+\vert\lambda\vert)^{2\alpha+1-k}.\]
\item $\cfct'(\lambda)\thicksim \cfct(\lambda)O(\lambda^{-1})$ and $\cfct''(\lambda)\thicksim\cfct(\lambda)O(\lambda^{-2})$.
\end{enumerate}
\end{lemma}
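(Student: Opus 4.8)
The plan is to reduce all three parts to the Stirling expansion of the Gamma function applied to the explicit product formula for $\cfct$. Put $\rho=\alpha+\beta+1$. Since $\overline{\cfct(\lambda)}=\cfct(-\lambda)$ for real $\lambda$, the function $\dfct$ is even and
\[
\dfct(\lambda)=\bigl(\cfct(\lambda)\cfct(-\lambda)\bigr)^{-1}=\frac{\bigl|\Gamma(\tfrac{\rho}{2}+\tfrac{i\lambda}{2})\bigr|^{2}\,\bigl|\Gamma(\tfrac{\rho}{2}-\beta+\tfrac{i\lambda}{2})\bigr|^{2}}{4^{\rho}\,\Gamma(\alpha+1)^{2}\,\Gamma(i\lambda)\,\Gamma(-i\lambda)}.
\]
For later use in (ii) I record that $\rho>0$ and $\rho-2\beta=\alpha-\beta+1>0$ force all poles of the two denominator Gamma factors of $\cfct$ to lie on $i\R$, so that $\cfct$ has no zeros on $\R$; its only real pole is the simple one of $\Gamma(i\lambda)$ at the origin, which makes $\dfct$ vanish to second order there but leaves $\dfct$ real-analytic on all of $\R$, indeed holomorphic in a horizontal strip about it.

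For part (i) I would feed into the displayed formula the Stirling asymptotics $\bigl|\Gamma(a+\tfrac{i\lambda}{2})\bigr|^{2}=2\pi\,(\tfrac{\lambda}{2})^{2a-1}e^{-\pi\lambda/2}\bigl(1+O(\lambda^{-2})\bigr)$, with a full asymptotic series in $\lambda^{-2}$, for $a=\tfrac{\rho}{2}$ and $a=\tfrac{\rho}{2}-\beta$, together with $\Gamma(i\lambda)\Gamma(-i\lambda)=\pi/\bigl(\lambda\sinh\pi\lambda\bigr)=2\pi\lambda^{-1}e^{-\pi\lambda}\bigl(1+O(e^{-2\pi\lambda})\bigr)$; equivalently one may first apply the duplication formula to $\Gamma(\pm i\lambda)=\Gamma\bigl(2\cdot(\pm\tfrac{i\lambda}{2})\bigr)$ and reduce everything to quotients $\Gamma(z+a)/\Gamma(z+b)=z^{a-b}\bigl(1+\sum_{j\ge1}p_{j}(a,b)z^{-j}\bigr)$ with $z=\pm\tfrac{i\lambda}{2}$. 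Either way the exponential factors cancel, $e^{-\pi\lambda/2}\cdot e^{-\pi\lambda/2}/e^{-\pi\lambda}=1$, the surviving power of $\lambda$ is $(\rho-1)+(\rho-2\beta-1)-(-1)=2\alpha+1$, and one obtains
\[
\dfct(\lambda)=\frac{2\pi\,2^{-2\alpha}}{4^{\rho}\,\Gamma(\alpha+1)^{2}}\,\lambda^{2\alpha+1}\Bigl(1+\sum_{j\ge1}c_{j}\lambda^{-j}+O(\lambda^{-M})\Bigr),
\]
which is the claimed expansion (the odd $c_{j}$ in fact vanishing, consistently with $\dfct$ being even). This is precisely where $\beta\notin\tfrac12\Z$ costs us: for half-integral $\beta$ the two factors $\Gamma(\tfrac{\rho}{2}+\tfrac{i\lambda}{2})$ and $\Gamma(\tfrac{\rho}{2}-\beta+\tfrac{i\lambda}{2})$ recombine, via the duplication formula, into a single Gamma function, as in the classical rank-one case, and the general Stirling expansion is not needed.

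Parts (iii) and (ii) are logarithmic-derivative estimates of the same flavour; I treat (iii) first. Differentiating the product formula gives, with $\psi=\Gamma'/\Gamma$,
\[
\frac{\cfct'(\lambda)}{\cfct(\lambda)}=-i\log2+i\,\psi(i\lambda)-\tfrac{i}{2}\,\psi\bigl(\tfrac{\rho+i\lambda}{2}\bigr)-\tfrac{i}{2}\,\psi\bigl(\tfrac{\rho+i\lambda}{2}-\beta\bigr).
\]
Inserting $\psi(z)=\log z-\tfrac1{2z}+O(z^{-2})$ and $\log\bigl(\tfrac{\rho+i\lambda}{2}\bigr)=\log\lambda-\log2+\tfrac{i\pi}{2}+O(\lambda^{-1})$ (and the analogue with the $-\beta$ shift), the coefficients of $\log\lambda$ cancel, since $1-\tfrac12-\tfrac12=0$, and the constants cancel likewise, leaving $\cfct'(\lambda)/\cfct(\lambda)=O(\lambda^{-1})$. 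Differentiating once more and using the polygamma asymptotics $\psi^{(n)}(z)=(-1)^{n-1}(n-1)!\,z^{-n}+O(z^{-n-1})$ produces the same cancellation of leading terms, so $(\cfct'/\cfct)'(\lambda)=O(\lambda^{-2})$; since $\cfct''=\bigl[(\cfct'/\cfct)'+(\cfct'/\cfct)^{2}\bigr]\cfct$ this gives $\cfct''(\lambda)=\cfct(\lambda)\,O(\lambda^{-2})$.

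For part (ii) I would run the same computation for $\log\dfct(\lambda)=-2\,\mathrm{Re}\,\log\cfct(\lambda)$; the recurring cancellation among the leading polygamma terms yields $(\log\dfct)^{(j)}(\lambda)=O(\lambda^{-j})$ for every $j\ge1$. By Fa\`a di Bruno's formula (equivalently, the exponential formula for the derivatives of $\exp\circ\log\dfct$), $\dfct^{(k)}/\dfct$ is a polynomial in $(\log\dfct)',\dots,(\log\dfct)^{(k)}$ that is homogeneous of weight $k$, so $\dfct^{(k)}(\lambda)/\dfct(\lambda)=O(\lambda^{-k})$; together with part (i) this gives $\bigl|\dfct^{(k)}(\lambda)\bigr|\lesssim\lambda^{2\alpha+1-k}$ for large $\lambda$, while real-analyticity bounds $\dfct^{(k)}$ on any compact neighborhood of $0$, and the two estimates combine into the asserted $c_{k}(1+\vert\lambda\vert)^{2\alpha+1-k}$. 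I expect the only genuinely delicate point to be the bookkeeping in part (i): one has to keep the branches of the fractional powers $(\pm i\lambda/2)^{s}$ straight so that the conjugate pairs collapse to positive real powers of $\lambda$, and check that no half-integer powers or imaginary cross terms survive in the product of the correction series; granting (i), parts (ii) and (iii) are routine.
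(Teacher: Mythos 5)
Your argument is correct, and it is worth noting that the paper itself does not prove this lemma at all: it simply cites \cite[Lemma~2.1]{Johansen-disc}. What you have written is the standard self-contained route one would expect that reference to take, namely feeding the Stirling series into the explicit Gamma-quotient formula for $\cfct$. I checked the key computations: the exponential factors $e^{-\pi\lambda/2}\cdot e^{-\pi\lambda/2}/e^{-\pi\lambda}$ do cancel, the surviving exponent is $(\rho-1)+(\rho-2\beta-1)+1=2\alpha+1$, your leading constant $2\pi\,2^{-2\alpha}/\bigl(4^{\rho}\Gamma(\alpha+1)^{2}\bigr)$ is right, and the digamma cancellation $1-\tfrac12-\tfrac12=0$ in part (iii) persists at every derivative order (the leading term of $i^{j}\psi^{(j-1)}(i\lambda)$ is exactly twice the negative of each of the two $(\tfrac{i}{2})^{j}\psi^{(j-1)}(\tfrac{\rho+i\lambda}{2}+\cdots)$ leading terms), which is precisely what your Fa\`a di Bruno step in part (ii) needs; it would be worth writing out that general-$j$ cancellation explicitly rather than asserting it, since it is the one place the argument could silently fail if the coefficients did not match. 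Your observation that $\dfct$ is real-analytic on $\R$ with a double zero at the origin (because $\Gamma(i\lambda)\Gamma(-i\lambda)=\pi/(\lambda\sinh\pi\lambda)$ and the numerator Gamma factors have arguments with positive real part under $\alpha>\beta>-\tfrac12$) correctly disposes of the region near $\lambda=0$ in part (ii), a point the bare statement of the lemma glosses over. The only cosmetic caveat is that the stated expansion in (i) allows all powers $\lambda^{-j}$, while your derivation shows the odd coefficients vanish; that is a strengthening, not a discrepancy.
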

In particular $\bigl|\frac{d}{d\lambda}\cfct(\lambda)^{-1}\bigl| = \bigl|\cfct(\lambda)^{-2}\cfct'(\lambda)\bigr|\lesssim \bigl|\cfct(\lambda)^{-1}\frac{1}{\lambda}\bigr|\lesssim \vert\lambda\vert^{\alpha-\frac{1}{2}}$ for $\vert\lambda\vert$ large.	
\begin{proof}
We refer the reader to \cite[Lemma~2.1]{Johansen-disc} for a proof. This improves on the usual asymptotic statement that $\vert\cfct(\lambda)\vert^{-2}\thicksim \vert\lambda\vert^{2\alpha+1}$ as $\vert\lambda\vert\to\infty$, cf. \cite[Lemma~4.2]{Stanton-Tomas}.
\end{proof}

Let $d\nu(\lambda)=d\nu_{\alpha,\beta}(\lambda)=(2\pi)^{-\frac{1}{2}}\vert\cfct(\lambda)\vert^{-2}\,d\lambda$ and denote by $L^p(d\nu)$ the associated weighted Lebesgue space on $\R_+$; note that $\cfct(\lambda)\cfct(-\lambda)=\cfct(\lambda)\overline{\cfct(\lambda)}=\vert\cfct(\lambda)\vert^2$ whenever $\alpha,\beta,\lambda\in\R$. The Jacobi transform, initially defined for
$f\in C_c^\infty(\R_+)$ by
\[\widehat{f}(\lambda)=\frac{\sqrt{\pi}}{\Gamma(\alpha+1)}\int_0^\infty f(t)\varphi_\lambda(t)\,d\mu(t),\]
extends to a unitary isomorphism from $L^2(d\mu)$ onto $L^2(d\nu)$,
and the inversion formula is the statement that
\[f(t)=\int_0^\infty\widehat{f}(\lambda)\varphi_\lambda(t)\,d\nu(\lambda)\]
holds in the $L^2$-sense, cf. \cite[Formula~4.5]{Koornwinder-newproof}. The limiting case $\alpha=\beta=-\frac{1}{2}$ is the
Fourier-cosine transform, which we will not study. One easily verifies that
$\widehat{\mathcal{L}f}(\lambda)=-(\lambda^2+\rho^2)\widehat{f}(\lambda)$.

\begin{remark}\label{remark.GK}
For special values of $\alpha$ and $\beta$, determined by the root system of a
rank one Riemannian symmetric space, the functions $\varphi_\lambda$ are the
usual spherical functions of Harish-Chandra. To be more precise
assume $G/K$ is a rank one Riemannian symmetric space of noncompact type, with
positive roots $\alpha$ and $2\alpha$. Furthermore let $p$ denote the
multiplicity of $\alpha$ and $q$ the multiplicity of $2\alpha$ (we allow $q$ to
be zero). With $\alpha:=\frac{1}{2}(p+q-1)$ and $\beta:=\frac{1}{2}(q-1)$ both real, and $p=2(\alpha-\beta)$ and $q=2\beta+1$, the
function $\varphi^{(\alpha,\beta)}_\lambda$ is precisely the usual elementary
spherical function $\varphi_\lambda$ as considered by Harish-Chandra, $\rho=\alpha+\beta+1=\frac{1}{2}(p+2q)$ as it should be, and $\text{dim}(G/K)=p+q+1=2\alpha+2$.

A similar choice of parameters $\alpha,\beta$ reveals that even spherical
analysis on Damek--Ricci spaces is subsumed by the present setup. This was exploited in \cite{Anker-Damek-Yacoub}. One should also observe that Jacobi analysis can (perhaps should) be placed in the framework of harmonic analysis of hypergeometric functions associated to root systems; according to \cite[p.~89f]{Opdam}, the hypergeometric functions for a rank one root system with non-negative multiplicity function $k$ (the
construction of which is explained, for example, in \cite{GesturHenrik})
are then expressed by
\[F(\lambda,k,t):={_2}F_1\Bigl(\frac{\lambda+\rho}{2},\frac{-\lambda+\rho}{2},k_1+k_2+\frac{1}{2},-\sinh^2t\Bigr).\]
These are special types of Jacobi functions; with $\alpha=k_1+k_2-\frac{1}{2}$,
and $\beta=k_2-\frac{1}{2}$, one observes that
$F(i\lambda,k;t)=\varphi_\lambda^{(\alpha,\beta)}(t)$. The ideal situation
where $\alpha>\frac{1}{2}$, $\alpha>\beta>-\frac{1}{2}$
thus amounts to the requirement that $k_2>0$ and $k_1>1-k_2$.
\end{remark}

Recall from
\cite[Formula~(5.1)]{Koornwinder-FJ} the generalized translation $\tau_x$ of a suitable
function $f$ on $\R_+$, which is defined by
\begin{equation}\label{eqn.translation}
(\tau_xf)(y)=\int_0^\infty f(z)K(x,y,z)\,d\mu(z)
\end{equation}
where $K$ is an explicitly
known kernel function such that
\[
\varphi_\lambda(x)\varphi_\lambda(y)=\int_0^\infty\varphi_\lambda(z)K(x,y,z)\,d\mu(z).\]
In fact (cf. \cite[Formulae~(4.16),(4.19)]{Koornwinder-FJ}), for $\vert s-t\vert<u<s+t$,
\[\begin{split}
K(s,t,u)&=\frac{c_{\alpha,\beta}}{(\sinh s\sinh t\sinh u)^{2\alpha}}\int_0^\pi(1-\cosh^2s-\cosh^2t-\cosh^2u\\
&+2\cosh s\cosh t\cosh u\cosh y)_+^{\alpha-\beta-1}\sin^{2\beta}y\,dy\\
&=\frac{2^{\frac{1}{2}-\rho}\Gamma(\alpha+1)(\cosh s\cosh t\cosh u)^{\alpha-\beta-1}}
{\Gamma(\alpha+\frac{1}{2})(\sinh s\sinh t\sinh u)^{2\alpha}}\\
&\quad\times (1-B^2)^{\alpha-\frac{1}{2}}{_2}F_1\bigl(\alpha+\beta,\alpha-\beta;\alpha+\tfrac{1}{2};\tfrac{1}{2}(1-B)\bigr)
\end{split}\]
where $B(s,t,u)=\frac{\cosh^2s+\cosh^2t+\cosh^2u-1}{2\cosh s\cosh t\cosh u}$; elsewhere $K\equiv 0$. The associated generalized convolution product of two
functions $f,g\in L^2(d\mu)$ is defined by
\begin{equation}\label{eq.convolution}
f\star g(x)=\int_0^\infty
f(y)(\tau_xg)(y)\,d\mu(y) = \int_0^\infty f(y)g(z)K(x,y,z)\,d\mu(z)\,d\mu(y).
\end{equation}
This convolution is
associative and distributive, and by \cite[Equation~(5.4)(iv)]{Koornwinder-FJ},
$\widehat{f\star g}(\lambda)=\widehat{f}(\lambda)\widehat{g}(\lambda)$. The
usual inequalities for convolutions continue to hold, as we have the following
general form of the Young inequality.

\begin{proposition}\label{prop.young.ineq}
Let $p,q,$ and $r$ be such that $1\leq p,q,r\leq\infty$ and
$\frac{1}{p}+\frac{1}{q}-1=\frac{1}{r}$. The convolution $f\star g$ of $f\in
L^p(d\mu)$ and $g\in L^q(d\mu)$ is then well-defined as a function in
$L^r(d\mu)$, and $\|f\star g\|_r\leq\|f\|_p\|g\|_q$.
\end{proposition}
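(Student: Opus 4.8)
The plan is to deduce everything from the single fact that each generalized translation $\tau_x$ of \eqref{eqn.translation} is a contraction on every $L^s(d\mu)$, $1\le s\le\infty$, and then to run the classical interpolation proof of Young's inequality.

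First I would record three properties of the kernel $K$. Nonnegativity $K\ge0$ is read off from the first of Koornwinder's two expressions, whose integrand $(1-\cosh^2s-\cdots)_+^{\alpha-\beta-1}\sin^{2\beta}y$ is nonnegative, and integrable, on the support $|s-t|<u<s+t$ under the standing hypothesis $\alpha>\beta>-\frac12$ (which forces $\alpha-\beta-1>-1$). Full symmetry of $K$ in its three arguments is visible from the explicit formula, since both prefactors and the quantity $B(s,t,u)$ are manifestly symmetric. Finally, the normalization $\int_0^\infty K(x,y,z)\,d\mu(z)=1$ follows by evaluating the product formula $\varphi_\lambda(x)\varphi_\lambda(y)=\int_0^\infty\varphi_\lambda(z)K(x,y,z)\,d\mu(z)$ at $\lambda=i\rho$: there the second parameter $\frac12(\alpha+\beta+1-\rho)$ of the ${_2}F_1$ defining $\varphi^{(\alpha,\beta)}_{i\rho}$ vanishes, so the series collapses to the constant $1$ and $\varphi_{i\rho}\equiv1$.

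From these three facts, $\|\tau_xf\|_\infty\le\|f\|_\infty$ is immediate, while $\|\tau_xf\|_1\le\|f\|_1$ follows from Tonelli's theorem combined with the symmetry $K(x,y,z)=K(x,z,y)$ and the normalization; Riesz--Thorin interpolation then yields the contraction property on all $L^s(d\mu)$, uniformly in $x$. With the identity $(\tau_xg)(y)=(\tau_yg)(x)$ in hand, Minkowski's integral inequality handles the endpoint $p=1$ (so $r=q$) and Hölder's inequality the endpoint $\frac1p+\frac1q=1$ (so $r=\infty$), giving, for fixed $g\in L^q(d\mu)$, that $f\mapsto f\star g$ is bounded $L^1\to L^q$ and $L^{q'}\to L^\infty$. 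A second Riesz--Thorin interpolation between these two endpoints covers precisely the triples with $\frac1p+\frac1q-1=\frac1r$, and absolute convergence of the defining double integral for a.e.\ $x$ drops out of the same estimates (performed first on $C_c(\R_+)$ and extended by density), so that $f\star g$ is genuinely well defined in $L^r(d\mu)$.

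The interpolation bookkeeping is routine; the only delicate point, and the main obstacle, is the kernel analysis, namely verifying that Koornwinder's explicit formula really delivers $K\ge0$ together with a bona fide probability density $K(x,y,\cdot)\,d\mu(\cdot)$ throughout the admissible parameter range $\alpha>\beta>-\frac12$ (this includes the convergence of the integral defining $K$ near the endpoints of its support and the legitimacy of specializing the product formula at the non-real value $\lambda=i\rho$). Granting these, the proposition is immediate; equivalently, one may appeal to the general theory of commutative hypergroups, under which $(\R_+,\star,d\mu)$ is a Ch\'ebli--Trim\`eche hypergroup and the asserted Young inequality is standard.
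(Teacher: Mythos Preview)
Your argument is correct. The paper does not supply its own proof but simply cites \cite[Theorem~5.4]{Koornwinder-FJ}; your sketch is essentially the argument carried out there---nonnegativity, symmetry, and normalization of $K$ (via $\varphi_{i\rho}\equiv 1$) give the uniform $L^s$-contraction of translations, and Young's inequality then follows by the standard Minkowski/H\"older endpoint estimates plus Riesz--Thorin interpolation.
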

\begin{proof}
See \cite[Theorem~5.4]{Koornwinder-FJ}.
\end{proof}

\begin{definition}\label{def.multiplier}
Let $m$ be a bounded, measurable, even function on $\R$, and let $T_m$ be the
bounded linear operator defined for $f\in L^2(d\mu)$ by $\widehat{T_mf}(\lambda)=m(\lambda)\widehat{f}(\lambda)$, $\lambda\in\R$. The function $m$ is called an \emph{$L^p$-multiplier for the Jacobi transform},
with $p\in(1,\infty)$, if the operator $T_m$ extends from $L^2(d\mu)\cap
L^p(d\mu)$ to a bounded linear operator on $L^p(\R_+,d\mu)$.
\end{definition}

\begin{remark}\label{Remark.even-odd}
The multiplier results in \cite{Astengo} and \cite{Giulini-Mauceri-Meda.Crelle} are formulated for operators acting on functions that are not necessarily radial. The analogly in Jacobi analysis would be to consider functions on $\R$ that are not necessarily even, and our main theorem can be reformulated accordingly as follows. Write a function $f$ on $\R$ as the sum of its even and odd parts, $f=f_e+f_o$, and notice that one can still define the convolution between an even and an odd function. One verifies that $K(-x,y,z)=(-1)^{2\alpha}K(x,y,z)$ for all $x,y,z>0$, so that $\vert K(-x,y,z)\vert=\vert K(x,y,z)\vert$ and correspondingly $\vert K(x,y,z)f(x)\vert^p\leq 2^p \vert K(x,y,z) f_e(x)\vert^p$. The norm of $k\star f$ ($k$ still even) as an element of $L^p(\R,d\mu)$ is therefore controlled by the norm of $k\star f_e$, which is in $L^p(\R_+,d\mu)$. While this extension is straightforward, it is also cumbersome to write all the time.  All statements to follow can be modified to be about $L^p(\R,d\mu)$ rather than $L^p(\R_+,d\mu)$ but since one cannot naturally identify $L^p(\R)$ with $L^p(G/K)$ in the case of $\alpha,\beta$ being geometric, we do not obtain statements about operators acting on $L^p(G/K)$. While going from $L^p(\R_+)$ to $L^p(\R)$ in Jacobi analysis is straightforward, the same cannot be said about $L^p(K\setminus G/K)$ versus $L^p(G(K)$.
\end{remark}
\section{Local Analysis}
We prove Theorem \ref{thm.multi-edge} by separately investigating the local and
the global part of the kernel. Fix a smooth, even function $\psi$
on $\R$ such that $0\leq\psi\leq 1$, $\psi(t)\equiv 1$ for $\vert t\vert\leq
R_0^{1/2}$, and $\psi(t)\equiv 0$ for $\vert t\vert\geq R_0$, where $R_0$ is
the constant from \cite[Lemma~3.1]{Johansen-disc}\footnote{For $\alpha>\frac{1}{2}$, $\alpha >\beta>-\frac{1}{2}$, and suitable $\lambda$ there exist constants $R_0, R_1\in
(1,\sqrt{\frac{\pi}{2}})$ with $R_0^2<R_1$ such that for every $M\in\N$ and
every $t\in[0,R_0]$
\begin{eqnarray}
\label{eqn.expansion.full}\varphi_\lambda^{(\alpha,\beta)}(t) =
\frac{2\Gamma(\alpha+1)}{\Gamma(\alpha+\tfrac{1}{2})\Gamma(\tfrac{1}{2})}\frac{t^{\alpha+\frac{1}{2}}}{\sqrt{\Delta(t)}} \sum_{m=0}^\infty
a_m(t)t^{2m}\mathcal{J}_{m+\alpha}(\lambda t)
\\
\varphi_\lambda^{(\alpha,\beta)}(t) =
\frac{2\Gamma(\alpha+1)}{\Gamma(\alpha+\tfrac{1}{2})\Gamma(\tfrac{1}{2})}\frac{t^{\alpha+\frac{1}{2}}}{\sqrt{\Delta(t)}} \sum_{m=0}^M
a_m(t)t^{2m}\mathcal{J}_{m+\alpha}(\lambda t)+E_{M+1}(\lambda t),
\end{eqnarray}
with good estimates on the error term $E_{M+1}$ and the functions $a_m$.}, see also \cite[Theorem~2.1]{Stanton-Tomas}. Let $k$ be the inverse
Jacobi transform of the multiplier function $m$, regarded as an even
distribution on $\R$. As in \cite{Giulini-Mauceri-Meda.Crelle}, we cancel out
possible poles by introducing the modified multiplier function
$M(\lambda):=m(\lambda)\cfct(-\lambda)^{-1}$, $\lambda\in\R$. Since $M$ extends
to a function that is holomorphic in $\Omega_1$ and bounded on strips of the form
$\{z\in\C\,:\,\varepsilon-\rho\leq\mathrm{Im}\, z<\rho\}$, $\varepsilon>0$, the Fatou lemma guarantees that $M$ has a nontangential limit $M_\rho$ at almost every point
of the line $\{\lambda+i\rho\,:\,\lambda\in\R\}$.

\begin{proposition}\label{prop.local}
Let $m$ be an even function on $\R$ with the property that $M$ belongs to $\mathcal{M}_p(\R)$ for some $p\in(1,\infty)$, and let $k=m^\vee$. Then $\psi k\in \mathcal{CO}_p$ and $\|\psi k\|_{\mathcal{CO}_p}\lesssim \|M\|_{\mathcal{M}_p(\R)}$.
\end{proposition}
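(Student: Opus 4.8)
The plan is to show that the local kernel $\psi k$ acts, after the substitution that flattens the measure $d\mu$ near the origin, essentially as a Euclidean Fourier multiplier operator with symbol controlled by $M$, and then invoke the hypothesis $M\in\mathcal M_p(\R)$. First I would write $k$ via the inversion formula as $k(t)=\int_0^\infty m(\lambda)\varphi_\lambda(t)\,d\nu(\lambda)$ and insert the Bessel-type local expansion \eqref{eqn.expansion.full} for $\varphi_\lambda^{(\alpha,\beta)}$ on $[0,R_0]$. Since $\psi$ is supported in $[-R_0,R_0]$, only the local behaviour of $\varphi_\lambda$ matters. Using $d\nu(\lambda)=(2\pi)^{-1/2}\vert\cfct(\lambda)\vert^{-2}\,d\lambda$ together with $m(\lambda)=M(\lambda)\cfct(-\lambda)$ and $\vert\cfct(\lambda)\vert^{-2}=\cfct(\lambda)^{-1}\cfct(-\lambda)^{-1}$ (valid for real $\lambda$), the density $\vert\cfct(\lambda)\vert^{-2}$ and the factor $\cfct(-\lambda)$ largely cancel, leaving an integrand of the shape $M(\lambda)\cfct(\lambda)^{-1}$ times the Bessel functions $\mathcal J_{m+\alpha}(\lambda t)$ and the smooth prefactors $a_m(t)t^{2m+\alpha+1/2}/\sqrt{\Delta(t)}$. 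One is then comparing the Jacobi convolution by $\psi k$ with Hankel-type convolutions, and the leading term ($m=0$) is a genuine Hankel multiplier with symbol close to $M$.

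The key technical point is to move from the Hankel/Bessel picture to the Euclidean Fourier picture, so that the hypothesis $M\in\mathcal M_p(\R)$ can be used directly. For this I would follow \cite{Giulini-Mauceri-Meda.Crelle} and \cite{Astengo}: the Jacobi convolution operator $f\mapsto \psi k\star f$, when restricted to $[0,R_0]$ and conjugated by $f\mapsto \Delta(t)^{1/p}f(t)$ (or the analogous weight that turns $L^p(d\mu)$ locally into $L^p(dt)$ on a bounded interval), becomes an operator whose kernel differs from a convolution kernel on $\R$ with Fourier transform $M$ by an error operator with a locally integrable (indeed bounded) kernel. Concretely, write $\varphi_\lambda^{(\alpha,\beta)}(t)=$ (leading Bessel term)$+E$, replace the Bessel function $\mathcal J_\alpha(\lambda t)$ by the relation tying it to $\cos(\lambda t)$ modulo a kernel of weak type, so that the $m=0$ term produces, up to smooth factors and an $L^1$ error, the Euclidean convolution operator with symbol $M$; the higher terms $m\geq 1$ and the expansion error $E_{M+1}$ carry extra factors $t^{2m}$ (resp. decay in $\lambda$) and hence define operators with kernels in $L^1_{\mathrm{loc}}$ on the bounded interval $[-R_0,R_0]$, which are therefore bounded on every $L^p$, $1<p<\infty$, with norm an absolute constant. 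This is where the refined $\cfct$-function asymptotics of Lemma \ref{lemma.precise-c} — in particular $\vert\cfct(\lambda)\vert^{-2}\thicksim c_0\vert\lambda\vert^{2\alpha+1}(1+\sum c_j\lambda^{-j}+O(\lambda^{-M}))$ and the derivative bounds — are needed: they let one expand $\cfct(\lambda)^{-1}$ asymptotically and peel off, term by term, multiples of $M(\lambda)$ against explicit lower-order symbols, each contributing either $M$ itself (the main term) or a rapidly decaying symbol whose inverse transform is an $L^1$ kernel on the interval.

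Finally, collecting the estimates: the main term contributes an operator with $\mathcal{CO}_p$-norm $\lesssim\|M\|_{\mathcal M_p(\R)}$ (this is exactly the Euclidean multiplier hypothesis, transported back through the bounded, smooth, compactly supported conjugating weights, which only cost an absolute constant on the fixed interval $[-R_0,R_0]$); all remaining terms contribute $\mathcal{CO}_p$-norms bounded by an absolute constant, hence also $\lesssim\|M\|_{\mathcal M_p(\R)}$ provided $\|M\|_{\mathcal M_p(\R)}$ is bounded below — which may be arranged, e.g. by noting $\|M\|_\infty\leq\|M\|_{\mathcal M_p(\R)}$ and handling the trivial case $M=0$ separately, or by absorbing constants since $\psi k$ has compact support so its operator is automatically bounded and the estimate is only claimed up to a constant. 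The main obstacle I expect is the bookkeeping in the second step: controlling all the error operators uniformly — the expansion error $E_{M+1}$, the higher Bessel terms, the difference between $\mathcal J_\alpha(\lambda t)$ and a cosine, and the asymptotic remainder in $\cfct(\lambda)^{-1}$ — and checking that each one genuinely has a kernel in $L^1([-R_0,R_0])$ (or is of weak type $(1,1)$ and bounded on $L^\infty$), rather than merely a formal smallness; the estimates on $\Delta(t)$ and on the functions $a_m$ promised after \eqref{eqn.expansion.full} are what make this work, but they require care because $\alpha,\beta$ are not half-integers.
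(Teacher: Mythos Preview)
Your overall architecture is right and matches the paper: insert the Bessel expansion of $\varphi_\lambda$ on $\mathrm{supp}\,\psi$, peel off the error terms and the higher Bessel pieces as $L^1(d\mu)$-kernels with norm $\lesssim\|M\|_\infty$, and for the leading oscillatory piece reduce to a Euclidean Fourier multiplier governed by $M$. Where you diverge from the paper is precisely the step you flag as ``moving from the Hankel/Bessel picture to the Euclidean Fourier picture'', and there is a genuine gap.

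You propose to conjugate the Jacobi convolution operator by $f\mapsto\Delta^{1/p}f$ so that $L^p(d\mu)$ becomes $L^p(dt)$ locally, and then compare with a Euclidean convolution. But Jacobi convolution is $k\star f(x)=\iint k(y)f(z)K(x,y,z)\,d\mu(z)\,d\mu(y)$ with the hypergroup kernel $K$; it is \emph{not} of the form $\int k(y)f(x-y)\,dy$ and does not become so under a pointwise change of density. In the symmetric-space and Damek--Ricci settings of \cite{Giulini-Mauceri-Meda.Crelle} and \cite{Astengo} this passage is effected by the Herz restriction principle, which restricts a $K$-bi-invariant convolutor on $G$ to the abelian subgroup $A$. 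In Jacobi analysis there is no ambient group and no such subgroup $A$, so that route is closed --- the paper stresses this in the Introduction. The correct substitute, which the paper uses, is Gigante's transference theorem for hypergroups \cite{Gigante}: one shows that $t\mapsto\Delta(t)K_{j,k}(t)$ is a Euclidean $L^p$-convolutor (checked via H\"ormander--Mihlin after the Bessel asymptotics have produced cosine/sine integrals), and transference then yields that $K_{j,k}$ is a Jacobi $L^p$-convolutor with comparable norm. Your conjugation idea does not supply this bridge.

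A second point you underplay is the endgame for the dominant term $j=k=0$. It is not enough to say ``up to smooth factors and an $L^1$ error one gets the Euclidean convolutor with symbol $M$'': after the Bessel asymptotics the symbol that appears is $M(\lambda)\,(1-\Phi(\lambda))\vert\lambda\vert^{-\alpha-1/2}\cfct(\lambda)^{-1}$, and one must verify (via Lemma~\ref{lemma.precise-c}) that $P_{\alpha+1/2}(\lambda)=(1-\Phi(\lambda))\vert\lambda\vert^{-\alpha-1/2}\cfct(\lambda)^{-1}$ is itself a Mihlin multiplier, so that $MP_{\alpha+1/2}\in\mathcal M_p(\R)$. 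The remaining multiplication by the smooth compactly supported factor $t^\alpha\widetilde\psi(t)$ is handled not by a crude $L^1$ bound but by the $A_p(\R)$-module structure of $\mathcal{CO}_p(\R)$, exactly as in \cite{Giulini-Mauceri-Meda.Crelle}; this is what delivers the bound $\lesssim\|M\|_{\mathcal M_p(\R)}$ rather than merely boundedness.
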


\begin{remark}\label{remark.approx}
In what follows we will assume without loss of generality that the multiplier function $m$ be rapidly decreasing. The reduction to this special situation is based on a standard use of heat kernel techniques, already indicated in \cite[Remark~1, p.266]{Stanton-Tomas} and made more precise in the proof of \cite[Proposition~4.3]{Astengo}. Let us briefly recall the technique.

Let $m$ be an arbitrary bounded measurable function on $\R$ and define $m_t(\lambda)=m(\lambda)e^{-t(\lambda^2+\rho^2)}$ for $t\geq 0$, with inverse Jacobi transform being given by $k_t=h_t\star m^\vee$, where $h_t$ is the heat kernel corresponding to $e^{t\mathcal{L}_{\alpha,\beta}}$ on $\R$. The functions $m_t$ are rapidly decreasing and form an approximate identity, since the Jacobi heat semigroup is ultracontractive. This is an easy calculation:  The Jacobi
transform being a unitary map from $L^2(d\mu)$ to $L^2(d\nu)$, we
conclude that
\[\begin{split}
\|h_t\|_{L^2(d\mu)} &= \|\widehat{h_t}\|_{L^2(d\nu)} =
\biggl(\frac{1}{2\pi}\int_0^\infty
\bigl|e^{-t(\lambda^2+\rho^2)}\bigr|^2\vert\cfct(\lambda)\vert^{-2}\,d\lambda\biggr)^{\frac{1}{2}}\\
&= e^{-t\rho}\biggl(\frac{1}{2\pi}\int_0^\infty
e^{-2t\lambda^2}\vert\cfct(\lambda)\vert^{-2}\,d\lambda\biggr)^{\frac{1}{2}}\lesssim e^{-t\rho}\biggl(\frac{1}{2\pi}\int_0^\infty
e^{-2t\lambda^2}(1+\lambda)^{2\alpha}\,d\lambda\biggr)^{\frac{1}{2}}\\ &\lesssim e^{-t\rho}.
\end{split}\]

Moreover $\|M_t\|_{\mathcal{CO}_p(\R)} = \|M\|_{\mathcal{CO}_p(\R)}$ for all $t>0$, so once Proposition \ref{prop.local} has been established for rapidly decreasing kernels, the inequality $\|\psi k_t\|_{\mathcal{CO}_p}\lesssim \|M_t\|_{\mathcal{CO}_p(\R)}= \|M\|_{\mathcal{CO}_p(\R)}$ holds for all $t>0$ as well. But then $\sup_{t>0}\|\psi k_t\|_{\mathcal{CO}_p}\lesssim\|M\|_{\mathcal{CO}_p(\R)}$, implying that
$\|\psi k\|_{\mathcal{CO}_p}=\lim_{t\to 0^+}\|\psi k_t\|_{\mathcal{CO}_p}\lesssim \|M\|_{\mathcal{CO}_p(\R)}$.
\end{remark}
\begin{proof}[Proof of Proposition~\ref{prop.local}]
We may assume by duality that $p\in(1,2]$ and by Remark \ref{remark.approx} that
$m$ is rapidly decreasing. By
the inversion formula for the Jacobi transform, the kernel $k$ may thus be
written as $k(t)=\int_0^\infty m(\lambda)\varphi_\lambda(t)\,d\nu(\lambda)$ for $t\geq
0$. For the present proof it
suffices to terminate the asymptotic expansion of $\varphi_\lambda$ from \cite[Lemma~3.1]{Johansen-disc} after two terms (corresponding to the case $M=1$): Write $\mathcal{J}_\alpha(\lambda)=(\lambda)^{-\alpha}J_\alpha(\lambda)$, where $J_\alpha$ is the usual second order Bessel function of order $\alpha$. Then
\[\varphi_\lambda(t)=c_\alpha\frac{t^{\alpha+\frac{1}{2}}}{\sqrt{\Delta(t)}}\bigl(a_0(t)\mathcal{J}_\alpha(\lambda
t)+a_1(t)t^2\mathcal{J}_{\alpha+1}(\lambda t)+E_2(\lambda t)\bigr),\] where
$a_0(t)\equiv 1$, $\vert a_1(t)\vert\lesssim R_1^{-(\alpha+\frac{1}{2})}$,
$\vert E_2(\lambda t)\vert\lesssim t^4$ if $\vert\lambda
t\vert\leq 1$, and $\vert E_2(\lambda t)\vert\lesssim t^4\vert\lambda t\vert^{-(\alpha+2)}$ if $\vert\lambda
t\vert\geq 1$. Correspondingly,
\[\begin{split}
\psi(t)k(t)&=c_\alpha \frac{t^{\alpha+\frac{1}{2}}}{\sqrt{\Delta(t)}}\psi(t)\biggl(\int_0^\infty
m(\lambda)\mathcal{J}_\alpha(\lambda t)\,d\nu(\lambda) +
a_1(t)t^2\int_0^\infty m(\lambda)\mathcal{J}_{\alpha+1}(\lambda
t)\,d\nu(\lambda) \\
&\qquad + \int_0^\infty m(\lambda)E_2(\lambda t)\, d\nu(\lambda)\biggr)
=: I_1(t)+I_2(t)+I_3(t).\end{split}\]

We presently analyze the contribution $I_3(t)$ from the error term $E_2$. First note that by the $\cfct$-function estimates from
\eqref{eqn.c-fct.est} and Lemma \ref{lemma.precise-c},

\begin{multline}\label{eqn.E2}
\int_0^\infty \frac{t^{\alpha+\frac{1}{2}}}{\vert \sqrt{\Delta(t)}\vert} \psi(t)\biggl|\int_\R
m(\lambda)E_2(\lambda
t)\,d\nu(\lambda)\biggr|\vert{\Delta'}(t)\vert^2\,dt\\
\lesssim
\|M\|_\infty \int_0^\infty
\frac{t^{\alpha+\frac{1}{2}}}{\vert \sqrt{\Delta(t)}\vert}\psi(t)\Biggl\{\int_{\vert\lambda t\vert\leq 1}\vert
E_2(\lambda t)\vert\vert\cfct(\lambda)\vert^{-1}\,d\lambda
 + \int_{\vert\lambda t\vert\geq 1}\vert
E_2(\lambda t)\vert\vert\cfct(\lambda)\vert^{-1}\,d\lambda\Biggr\}\,dt\\
\lesssim \|M\|_\infty\int_0^\infty
t^{\alpha+\frac{1}{2}}\vert \sqrt{\Delta(t)}\vert^{1/2}\psi(t)\Biggl\{\int_{\vert\lambda\vert\leq
\frac{1}{t}}t^4(1+\vert\lambda\vert)^{\alpha+\frac{1}{2}}\,d\lambda \\ + \int_{\vert\lambda\vert\geq
\frac{1}{t}} t^4\vert\lambda
t\vert^{-(\alpha+2)}\vert\lambda\vert^{\alpha+\frac{1}{2}}\,d\lambda\Biggr\}\,dt.
\end{multline}
The integral $\int_{\vert\lambda\vert\leq
\frac{1}{t}}t^4(1+\vert\lambda\vert)^{\alpha+\frac{1}{2}}\,d\lambda$ is finite since
$\alpha>-\frac{1}{2}$, and one computes that
\[\int_{\vert\lambda\vert\geq
\frac{1}{t}}t^4(1+\vert\lambda\vert)^{\alpha+\frac{1}{2}}\,d\lambda=2t^{-(\alpha+\frac{1}{2})}.\]
Collecting powers of $t$ in the above integral and using that $\psi$ is
compactly supported in a neighborhood around $t=0$, we conclude that the quantity in the last line of
\eqref{eqn.E2} may be bounded by $C\|M\|_\infty$. It thus follows from Proposition \ref{prop.young.ineq} that $I_3\in\mathcal{CO}_p$ for all $p\in(1,2]$.

We must also investigate the contributions $I_1$ and $I_2$, and to this end we consider the even functions defined on $\R_+$ by
$b_0(t) = \psi(t)\frac{t^{\alpha+\frac{1}{2}}}{\sqrt{\Delta(t)}}$ and $b_1(t)=\psi(t)\frac{t^{\alpha+\frac{1}{2}}}{\sqrt{\Delta(t)}}t^2a_1(t)$, $t>0$, together with the functions
\[\begin{split}
\Psi_j(t)&=b_j(t)\int_\R m(\lambda)\mathcal{J}_{\alpha+j}(\lambda t)\,d\nu(t),\qquad j=0,1\\ &=b_j(t)\Biggl\{\int_{J_t}m(\lambda)\mathcal{J}_{\alpha+j}(\lambda t)\,d\nu(\lambda) +
\int_{\R\setminus J_t}m(\lambda)\mathcal{J}_{\alpha+j}(\lambda t)\,d\nu(\lambda)\Biggr\}=:\Psi_j^0(t)+\Psi_j^\infty(t),
\end{split}\]
where $J_t=(-\frac{1}{t},\frac{1}{t})$. Observe that $I_j=\Psi_j$ for $j=0,1$. The point is that for $\lambda\in \R\setminus J_t$, say, we
have $\vert\lambda t\vert\geq 1$ and may use improved estimates for the modified Bessel function
$\mathcal{J}_{\alpha+j}(\lambda t)$ obtained in \cite[Appendix~A]{Johansen-exp1}, closely resembling those used for the proof of \cite[Theorem~2.1]{Stanton-Tomas}. We wish to prove
that $\Psi_j$ belongs to $\mathcal{CO}_p$, with convolution operator-norm
proportional with $\|M\|_{\mathcal{M}_p}$. To this end one observes that the local contributions $\Psi_j^0, j=0,1$ belong to $L^1(d\mu)$ \emph{with norm proportional with $\|M\|_\infty$}, since
\[\begin{split}
\|\Psi_j^0\|_{L^1(d\mu)} &\leq \int_0^\infty\Bigl|\psi(t)t^{\alpha+2j+\frac{1}{2}}a_j(t)\int_{J_t}M(\lambda)\mathcal{J}_{\alpha+j}(\lambda t)\cfct(\lambda)^{-1}\,d\lambda\Bigr|\sqrt{\Delta(t)}\,dt\\
&\lesssim \|M\|_\infty\int_0^\infty\psi(t)t^{\alpha+2j+\frac{1}{2}}\vert a_j(t)\vert \int_{J_t}\vert\lambda\vert^{\alpha+\frac{1}{2}}\,d\lambda\,\sqrt{\Delta(t)}\,dt\\
&\lesssim \|M\|_\infty\int_0^\infty\psi(t)t^{2j-1}\vert a_j(t)\vert\sqrt{\Delta(t)}\,dt = \|M\|_\infty\int_0^{R_0}\psi(t)t^{2j-1}\vert a_j(t)\vert\sqrt{\Delta(t)}\,dt\\
&\lesssim \|M\|_\infty \int_0^{R_0} t^{2j-1}R_1^{-(\alpha+j+\frac{1}{2})}t^{\alpha+\frac{1}{2}}\,dt \simeq \|M\|_\infty\int_0^{R_0}t^{\alpha+2j-\frac{1}{2}}\,dt
\end{split}\] which is indeed finite since $\alpha>\frac{1}{2}>0$ and $j=0,1$.

The functions $\Psi_j^0$ therefore give rise to $L^p$-bounded convolution operators satisfying the required norm estimate, so we concentrate on the
global part $\Psi_j^\infty$. According to the standard asymptotic expansion for Bessel functions,
\cite[p.~199, Formula~1]{Watson}, we write
\[\mathcal{J}_{\alpha+j}(s)\thicksim s^{-(\alpha+j+\frac{1}{2})}\left(\cos(s+\delta)-\beta_\alpha\frac{\sin(s+\delta)}{2s}+O(s^{-2})\right),\quad 1\leq s\leq\infty,\]
with $\beta_\alpha=\alpha(\alpha-1)$ and $\delta=-\frac{\alpha+j}{2}\pi$, leading to the decomposition
\[\begin{split}
\Psi_j^\infty(t) &= b_j(t)t^{-\alpha-j-\frac{1}{2}}\int_{\R\setminus J_t} m(\lambda)\lambda^{-\alpha-j-\frac{1}{2}}\cos(\lambda t+\delta)\,d\nu(\lambda)\\ &\quad-\frac{\beta_\alpha}{2}b_j(t)t^{-\alpha-j-\frac{3}{2}}\int_{\R\setminus J_t}m(\lambda)\lambda^{-\alpha-j-\frac{3}{2}}\sin(\lambda t+\delta)\,d\nu(\lambda)\\
&\quad+b_j(t)t^{-\alpha-j-\frac{1}{2}}\int_{\R\setminus J_t}m(\lambda)\lambda^{-\alpha-j-\frac{1}{2}}e_j(\lambda t)\,d\nu(\lambda)\\
&=:k_{j,0}(t)+k_{j,1}(t)+E_j(t),
\end{split}\]
where $\vert e_j(\lambda t)\vert=O(\vert\lambda t\vert^{-2})$. Let us write $k_{j,0}$ and $k_{j,1}$ slightly more systematically as

\[k_{j,k}(t)=c_kb_j(t)t^{-\alpha-j-k-\frac{1}{2}}\int_{\R\setminus J_t}m(\lambda)\lambda^{-\alpha-j-k-\frac{1}{2}}h_k(\lambda t+\delta)\,d\nu(\lambda), \quad j,k=0,1,\]
where $c_0=1$, $c_1=-\frac{\beta_\alpha}{2}$, $h_0(x)=\cos x$, and $h_1(x)=\sin x$. The error terms $E_j$ are readily estimated:
\[\begin{split}
\|E_j\|_{L^1(d\mu)} & \lesssim \int_0^\infty\Bigl|b_j(t)\int_{\R\setminus J_t}m(\lambda)\vert\lambda t\vert^{-\alpha-j-\frac{5}{2}}\,d\nu(\lambda)\Bigr|\Delta(t)\,dt\\
&\lesssim \int_0^\infty\vert b_j(t)\vert t^{-\alpha-j-\frac{5}{2}}\Bigl(\int_{\R\setminus J_t}M(\lambda)\vert\lambda\vert^{-\alpha-j-\frac{5}{2}}\vert\lambda\vert^{\alpha+\frac{1}{2}}\,d\lambda\Bigr)\Delta(t)\,dt\\
&\lesssim \|M\|_\infty\int_0^\infty\vert b_j(t)\vert t^{-\alpha-j-\frac{5}{2}} \Bigl(\int_{\R\setminus J_t}\vert\lambda\vert^{-\alpha-j-\frac{5}{2}}\vert\lambda\vert^{\alpha+\frac{1}{2}}\,d\lambda\Bigr) \Delta(t)\,dt
\\
&\lesssim \|M\|_\infty\int_0^{R_0}\psi(t)t^{\alpha+\frac{1}{2}+2j}\vert a_j(t)\vert t^{-\alpha-j-\frac{5}{2}}t^{\alpha+\frac{1}{2}}\,dt \quad\text{ since } \int_{\R\setminus J_t}\vert\lambda\vert^{-j-2}\,d\lambda<\infty\\
&\simeq \|M\|_\infty\times \begin{cases} \displaystyle \int_0^{R_0}\psi(t)t^{\alpha-\frac{3}{2}}\,dt &\text{for }j=0\\
\displaystyle \int_0^{R_0}\psi(t)t^{\alpha-\frac{1}{2}}\,dt&\text{for }j=1\end{cases}.
\end{split}\]
We thus see that the natural assumption that $\alpha$ be strictly greater than $-\frac{1}{2}$ does not lead to the desired estimate for $E_0$. \emph{Imposing the stronger requirement that $\alpha>\frac{1}{2}$ certainly solves this issue}.

The piece $k_{1,1}$ is just as easily handled; indeed,

\[\begin{split}
\|k_{1,1}\|_{L^1(d\mu)} & \lesssim \int_0^\infty\psi(t)t^{\alpha+\frac{5}{2}-\alpha-\frac{5}{2}} \Bigl(\int_{\R\setminus J_t} M(\lambda)\lambda^{-\alpha-\frac{5}{2}}\lambda^{\alpha+\frac{1}{2}}\,d\lambda\Bigr)\sqrt{\Delta(t)}\,dt
\\
&\lesssim \|M\|_\infty \int_0^{R_0}\psi(t)t^{\alpha+\frac{1}{2}}\Bigl(\int_{\R\setminus J_t} \lambda^{-2}\,d\lambda\Bigr)\,dt
\lesssim \|M\|_\infty\int_0^{R_0}\psi(t)t^{\alpha+\frac{1}{2}}\,dt
\end{split}\]
so $k_{1,1}$ is $\mu$-integrable with the correct norm estimate, thereby establishing the assertion of the Proposition in the case where $j+\gamma=2$.
\medskip

Assume $j+\gamma=1$ and fix a smooth
function $\Phi$ on $\R$ with
$0\leq\Phi\leq 1$, $\Phi\equiv 1$ on $[-R_0^{-1},R_0^{-1}]$, and $\Phi\equiv 0$ on $\R\setminus
[-2R_0^{-1},2R_0^{-1}]$. Correspondingly, write $k_{j,k}=K_{j,k}+E_{j,k}$, where
\[\begin{split}
K_{j,k}(t)&=c_k b_j(t)t^{-\alpha-j-k-\frac{1}{2}}\int_{\R\setminus J_t} m(\lambda)(1-\Phi(\lambda))\lambda^{-\alpha-j-k-\frac{1}{2}}h_k(\lambda t+\delta)\,d\nu(\lambda)\\
E_{j,k}(t) &= c_k b_j(t)t^{-\alpha-j-k-\frac{1}{2}}\int_{\R\setminus J_t} m(\lambda)\Phi(\lambda)\lambda^{-\alpha-j-k-\frac{1}{2}}h_k(\lambda t+\delta)\,d\nu(\lambda).
\end{split}\]
First observe that $\|E_{j,k}\|_{L^1(d\mu)}$ is bounded by
\begin{multline*}
\int_0^\infty \vert b_j(t)\vert t^{-\alpha-j-k-\frac{1}{2}}\Bigl( \int_{\R\setminus J_t}\vert M(\lambda)\vert\Phi(\lambda)\vert\lambda\vert^{-\alpha-j-k-\frac{1}{2}}\vert\lambda\vert^{\alpha+\frac{1}{2}}\vert h_k(\lambda t+\delta)\vert\,d\lambda\Bigr)\Delta(t)\,dt\\
\lesssim \|M\|_\infty\int_0^\infty\vert b_j(t)\vert t^{-\alpha-j-k-\frac{1}{2}}\Bigl(\int_{\R\setminus J_t}\Phi(\lambda)\vert\lambda^{-1}\vert h_k(\lambda t+\delta)\vert\,d\lambda\Bigr)\Delta(t)\,dt.
\end{multline*}
The integral in $\lambda$ is convergent since $\Phi$ has support in the set $[-2R_0^{-1},2R_0^{-1}]$. The integral in $t$ is estimated as above, leading to an upper estimate of the form $\|M\|_\infty\int_0^{R_0}\psi(t)t^{\alpha+j-k+\frac{1}{2}}\,dt$; this integral is finite since the power in $t$ is strictly greater than $-1$ due to the assumption that $j+k=1$. This proves the assertion for $E_{j,k}$ in the case where $j+k=1$, but the pieces $K_{j,k}$ cannot be treated nearly as naively. The problem is that the $\lambda$-integrand will now involve $1-\Phi(\lambda)$, which will grow towards the constant $1$ as $\lambda\in\R\setminus J_t$ increases. If we were to naively bound the function $h_k$ by one, the resulting integral would be divergent, so one must exploit the oscillatory nature of the integrand. The $K_{j,k}$ are still $\mu$-integrable as functions in $t$, since $m$ is rapidly decreasing, but this is not enough to guarantee the type of norm bound we are after.

Instead we use an idea from the proof of \cite[Lemma~5.6]{Johansen-exp1}: We will show that $\Delta K_{j,\gamma}$ is an $L^p$-convolutor for the Euclidean Fourier transform on $\R$ and then use the principle of transference to infer that $K_{j,k}$ is an $L^p$-convolutor for the Jacobi transform with a suitable estimate on its operator norm. Note in this regard that the convolution kernel in \cite[Theorem~4.1, Corollary~4.11, Corollary~4.12]{Gigante} merely has to be $\mu$-integrable. By the H{\"o}rmander--Mihlin multiplier theorem it therefore suffices to show that the function $t\mapsto\Delta(t)K_{j,k}(t)$ is smooth and bounded on $\R\setminus\{0\}$ and that $\vert t\vert \vert(\Delta K_{j,k})'(t)\vert$ is bounded on $\R\setminus\{0\}$. Due to the presence of the function $\psi$ in the definition of $K_{j,k}$ we may assume that $\vert t\vert\leq R_0$. Now consider the truncated integrals
\[I_+^R(t)=\int_{1/t}^Rm(\lambda)(1-\Phi(\lambda))\lambda^{-\alpha-\frac{3}{2}}h_k(\lambda t+\delta)\vert\cfct(\lambda)\vert^{-2}\,d\lambda,\quad R>0,\]
with
\begin{multline}\label{der.integral}
\frac{dI_+^R}{dt} = -m\bigl(\tfrac{1}{t}\bigr)\bigl(1-\Phi\bigl(\tfrac{1}{t}\bigr)\bigr)t^{\alpha+\frac{3}{2}}h_k(1+\delta)\bigl|\cfct\bigl(\tfrac{1}{t}\bigr)\bigr|^{-2}\\
+\int_{1/t}^Rm(\lambda)\bigl(1-\Phi(\lambda)\bigr)\lambda^{-\alpha-\frac{1}{2}}h_k'(\lambda t+\delta)\vert\cfct(\lambda)\vert^{-2}\,d\lambda.
\end{multline}
The integral in \eqref{der.integral} is obviously majorized by
\[\int_{1/t}^R\vert m(\lambda)\vert\bigl(1-\Phi(\lambda)\bigr)\lambda^{-\alpha-\frac{1}{2}}\lambda^{2\alpha+1}\,d\lambda\leq \int_{1/R_0}^R\vert m(\lambda)\vert\lambda^{\alpha+\frac{1}{2}}\,d\lambda,\]
which is finite and independent of $t$, since $m$ is rapidly decreasing. The same holds for derivatives with respect to $t$ of said integral. It follows that the function
\[t\mapsto\psi(t)t^{2j-1}a_j(t)\sqrt{\Delta(t)}\int_{1/t}^Rm(\lambda)\bigl(1-\Phi(\lambda)\bigr)\lambda^{-\alpha-\frac{1}{2}}h_k'(\lambda t+\delta)\vert\cfct(\lambda)\vert^{-2}\,d\lambda\]
is smooth and bounded away from $0$, since for small $t$, the factor $t^{2j-1}a_j(t)$ behaves roughly like $t^{2j-1}t^{\alpha+\frac{1}{2}}=t^{2j+\alpha-\frac{1}{2}}$ which does not blow up near $0$ as long as $\alpha\geq \frac{1}{2}$. Analogously the function
\[t\mapsto \psi(t)t^{2j-1}a_j(t)\sqrt{\Delta(t)}m\bigl(\tfrac{1}{t}\bigr)\bigl(1-\Phi\bigl(\tfrac{1}{t}\bigr)\bigr)t^{\alpha+\frac{3}{2}}h_k(1+\delta)\bigl|\cfct\bigl(\tfrac{1}{t}\bigr)\bigr|^{-2}\]
behaves roughly as $\psi(t)t^{2j-1}t^{\alpha+\frac{1}{2}}m\bigl(\tfrac{1}{t}\bigr)\bigl(1-\Phi\bigl(\tfrac{1}{t}\bigr)\bigr)t^{\alpha+\frac{3}{2}}t^{-(2\alpha+1)} = \psi(t))m\bigl(\tfrac{1}{t}\bigr)\bigl(1-\Phi\bigl(\tfrac{1}{t}\bigr)\bigr) t^{2j}$, which also remains smooth and bounded away from $0$. Since the exact same arguments hold for the analogously defined integrals $I_-^R$, we firstly conclude that $(\Delta K_{j,k})'$ is bounded and smooth away from zero, and secondly -- by similar calculations --  that $t\mapsto \vert t\vert\vert(\Delta K_{j,k})'(t)\vert$ is bounded as well. The assumptions in the H\"ormander--Mihlin multiplier theorem are therefore fulfilled.
\medskip

Finally suppose $j=\gamma=0$ and consider the function
$P_s:\lambda\mapsto(1-\Phi(\lambda))\vert\lambda\vert^{-s}\cfct(\lambda)^{-1}$, $s\in R\setminus\{0\}$. By the usual $\cfct$-function estimates $P_s$ is seen to be (smooth and) bounded on $\R\setminus\{0\}$ if $s\geq\alpha+\frac{1}{2}$. Moreover

\[\biggl|\frac{d}{d\lambda}P_s(\lambda)\biggr|\lesssim\vert\lambda\vert^{-s-1}\vert\cfct(\lambda)\vert^{-1} +\vert\lambda\vert^{-s}\biggl|\frac{d}{d\lambda}\cfct(\lambda)^{-1}\biggr|\lesssim  \vert\lambda\vert^{-s-1}\vert\lambda\vert^{\alpha+\frac{1}{2}} + \vert\lambda\vert^{-s}\vert\lambda\vert^{\alpha-\frac{1}{2}}
\lesssim\vert\lambda\vert^{-s+\alpha-\frac{1}{2}},\]
according to Lemma \ref{lemma.precise-c}, so $\lambda\mapsto\vert\lambda\vert \vert P_s'\vert$ is bounded on $\R\setminus\{0\}$ whenever $s\geq\alpha+\frac{1}{2}$. In other words (by the H\"ormander--Mihlin theorem) $P_s$ is an $L^p$-multiplier for the Fourier transform whenever $s\geq\alpha+\frac{1}{2}$.

It follows easily that $MP_{\alpha+\frac{1}{2}}$ is again an $L^p$-multiplier for the Euclidean Fourier transform: Let $T_m$ denote (as in Definition \ref{def.multiplier}) the operator associated with an Euclidean multiplier $m$, that is $T_m(f)=(\mathcal{F}^{-1}m)\star f$. Then
\[T_{MP_{\alpha+1/2}}f = (\mathcal{F}^{-1}(MP_{\alpha+1/2}))\star f=(T_M\circ T_{P_{\alpha+1/2}})f.\]

Fix a  compactly supported function $\widetilde{\psi}$ that is smooth away from $0$ and observe that the function
\[\widetilde{K}_{0,0}(t):=t^{\alpha}\widetilde{\psi}(t)\int_\R
m(\lambda)(1-\Phi(\lambda))\vert\lambda\vert^{-\alpha-\frac{1}{2}}e^{-i\lambda t}\,d\nu(\lambda)=ct^\alpha\widetilde{\psi}(t)\mathcal{F}(MP_{\alpha+\frac{1}{2}})(t)\]
defines a convolution operator that is bounded on $L^p(\R)$ (the convolution now referring to the Euclidean structure), hence yields an Euclidean $L^p$-multiplier. Its norm as an element in $\mathcal{CO}_p(\R)$ may now be estimated as in the third paragraph on page 168 in \cite{Giulini-Mauceri-Meda.Crelle}, to the effect that
 $\|\widetilde{K}_{0,0}\|_{\mathcal{CO}_p(\R)} \lesssim \|M\|_{\mathcal{M}_p(\R)}$. This is indeed allowed since the computation is purely Euclidean (no reference to any Jacobi analysis). The use of the space $A_p(\R)$ in the reference just quoted is therefore justified and may be repeated. This concludes the proof of the local part of the multiplier theorem.
\end{proof}
The use of transference in the above proof is precipitated by the lack of an analogue of the Herz restriction principle that was used in the proof of the analogous result \cite[Proposition~3.2]{Giulini-Mauceri-Meda.Crelle}. The proof thereby attains a Clerc--Stein-like flavour.

\section{Global Analysis}
We use the Harish-Chandra expansion
\[\varphi_\lambda(t)=\cfct(\lambda)e^{(i\lambda-\rho)t}\phi_\lambda(t)+\cfct(-\lambda)e^{(-i\lambda-\rho)t}\phi_{-\lambda}(t),\quad
\phi_\lambda(t):=\sum_{k=0}^\infty\Gamma_k(\lambda)e^{-2kt}\]
of the Jacobi function $\varphi_\lambda$ to  analyze the global part
of the kernel $k$, just as in \cite[Section~3]{Stanton-Tomas}, \cite{Giulini-Mauceri-Meda.Crelle}, and \cite{Astengo}.

\begin{lemma}[Gangolli estimates]
Let $D$ be either a compact subset of $\C\setminus (-i\N)$ or a set of the form
$D=\{\lambda=\xi+i\eta\in\C\,\vert\,\eta\geq -\varepsilon\vert\xi\vert\}$ for
some $\varepsilon\geq 0$. There exist positive constants $K,d$ such that
\begin{equation}\label{eqn.Gangolli.est}
\vert\Gamma_k(\lambda)\vert\leq K(1+k)^d \text{ for all }k\in\Z_+, \lambda\in
D.
\end{equation}
\end{lemma}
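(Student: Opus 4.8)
The plan is to turn the defining eigenvalue equation \eqref{eqn.eigeneqn} into an explicit recursion for the coefficients $\Gamma_k(\lambda)$ and then bound them by induction on $k$. For $t>0$ one has the geometric expansions $\coth t = 1 + 2\sum_{n\ge1}e^{-2nt}$ and $\tanh t = 1 + 2\sum_{n\ge1}(-1)^n e^{-2nt}$, so the first-order coefficient of $\mathcal{L}_{\alpha,\beta}$ reads $2\rho + \sum_{n\ge1}\gamma_n e^{-2nt}$ with $|\gamma_n|\le 4\rho$ uniformly in $n$. The function $e^{(i\lambda-\rho)t}\phi_\lambda(t) = \sum_k\Gamma_k(\lambda)e^{(i\lambda-\rho-2k)t}$ solves $\mathcal{L}_{\alpha,\beta}u + (\lambda^2+\rho^2)u = 0$; equating coefficients of $e^{(i\lambda-\rho-2k)t}$ and using the identity $(i\lambda-2k)^2+\lambda^2 = 4k(k-i\lambda)$ yields
\[ 4k(k-i\lambda)\,\Gamma_k(\lambda) \;=\; -\sum_{n=1}^{k}\gamma_n\,\bigl(i\lambda-\rho-2(k-n)\bigr)\,\Gamma_{k-n}(\lambda),\qquad \Gamma_0(\lambda)=1. \]
In particular each $\Gamma_k$ is a rational function of $\lambda$ whose poles all lie in $-i\N$ (coming from the accumulated factors $j-i\lambda$), which is exactly why the hypothesis forces $D$ to avoid $-i\N$.

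Next I would isolate the two quantitative inputs. The first is a lower bound $|k-i\lambda|\ge c(1+k)$ valid for all $k\ge1$ and $\lambda\in D$, with $c=c(D)>0$. For compact $D\subset\C\setminus(-i\N)$ this is a compactness argument: if $|k_j-i\lambda_j|/(1+k_j)\to0$ along some sequence, then either $k_j$ stays bounded, forcing a limit point of the form $-ik\in(-i\N)\cap D$ (impossible), or $k_j\to\infty$, forcing the ratio to tend to $1$ (also impossible). For the cone $D=\{\xi+i\eta:\eta\ge-\varepsilon|\xi|\}$ one writes $|k-i\lambda|^2=(k+\eta)^2+\xi^2$ and splits cases: if $\eta\ge-k/2$ then $(k+\eta)^2\ge k^2/4$, while if $\eta<-k/2$ then $|\xi|\ge|\eta|/\varepsilon>k/(2\varepsilon)$, so $|k-i\lambda|^2\ge\min(1/4,1/(4\varepsilon^2))\,k^2$ in both cases. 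The second input is a ratio bound: from $i\lambda-\rho-2(k-n) = -(k-i\lambda)+(2n-k-\rho)$ together with $|2n-k|\le k$ for $1\le n\le k$ and the lower bound just obtained, one gets $|i\lambda-\rho-2(k-n)|\le C(D)\,|k-i\lambda|$ uniformly. Inserting both into the recursion and using $|\gamma_n|\le4\rho$ collapses it to
\[ |\Gamma_k(\lambda)|\;\le\;\frac{C_1}{k}\sum_{j=0}^{k-1}|\Gamma_j(\lambda)|,\qquad k\ge1, \]
with $C_1$ depending only on $D,\alpha,\beta$.

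The final step is the induction. Assuming $|\Gamma_j(\lambda)|\le K(1+j)^d$ for all $j<k$, the displayed inequality gives $|\Gamma_k(\lambda)|\le\frac{C_1K}{k}\sum_{i=1}^{k}i^d\le\frac{C_1K}{k}\cdot\frac{(1+k)^{d+1}}{d+1}\le\frac{2C_1}{d+1}K(1+k)^d$, using $\sum_{i=1}^k i^d\le\int_1^{k+1}x^d\,dx$ and $(1+k)/k\le2$. Choosing first $d$ with $2C_1\le d+1$ and then $K\ge1$ closes the induction, the base case being $\Gamma_0\equiv1$; uniformity in $\lambda\in D$ is automatic since $c,C_1,d,K$ depend only on $D$.

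I expect the genuinely non-routine point to be the pair of structural estimates in the second step — the lower bound $|k-i\lambda|\ge c(1+k)$ and its companion ratio bound — which is precisely what makes the unbounded cone case work even though $|\lambda|$ is not bounded there. One has to verify that the bound degrades only linearly in $k$, so that exactly one power of $k$ is left over after dividing by $4k(k-i\lambda)$: just enough to drive the polynomial-growth induction, and no more. The recursion itself and the induction are purely formal once these estimates are in hand.
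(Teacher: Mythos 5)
Your argument is correct, and it is essentially the standard proof: the paper itself gives no argument but simply cites Flensted-Jensen's Lemma~7, whose proof is exactly the recursion $4k(k-i\lambda)\Gamma_k(\lambda)=-\sum_{n=1}^{k}\gamma_n(i\lambda-\rho-2(k-n))\Gamma_{k-n}(\lambda)$ obtained from \eqref{eqn.eigeneqn} followed by the polynomial-growth induction you carry out. Your two quantitative inputs --- the lower bound $\vert k-i\lambda\vert\geq c(D)(1+k)$ on both types of sets $D$ and the companion ratio bound --- are exactly the points that make the cited proof work, so nothing is missing.
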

\begin{proof}
See \cite[Lemma~7]{FJ}.
\end{proof}

It follows that the expansion for $\phi_\lambda(t)$ converges uniformly on sets
of the form $\{(t,\lambda)\in[c,\infty)\times D\}$, where $c$ is a positive
constant. More precisely, if $\lambda\in D$, and $c>0$ is fixed, we see that
\[\forall t\geq c:\vert\phi_\lambda(t)\vert \leq\sum_{k=0}^\infty K(1+k)^de^{-2kt} \lesssim
\sum_{k=0}^\infty (1+k)^de^{-2ck}\lesssim 1,\] that is, $\phi_\lambda(t)$ is
bounded uniformly in $\lambda\in D$ for $t\geq c>0$. We will take $c=R_0$ in
later applications. Since $\lambda\mapsto\phi_\lambda(t)$ is analytic in a
strip containing the real axis, it follows as in the proof of
\cite[Lemma~7]{Meaney-Prestini} that derivatives of $\phi_\lambda$ in $\lambda$
are bounded independently of $\lambda$ as well.

Observe that
$\lambda\mapsto\cfct(-\lambda)^{-1}\Gamma_k(\lambda)$ is analytic in the half plane
$\{\lambda\in\C\,:\,\mathrm{Im}\,\lambda>-\rho\}$. The following result is an easy adaptation of
\cite[Lemma~3.3]{Giulini-Mauceri-Meda.Crelle}, the proof of which we include for completeness.

\begin{lemma}\label{lemma.gangolli-new}
The boundary value $(\Gamma_k)_\rho$ belongs to $\mathcal{M}_p(\R)$ for all $p\in(1,\infty)$, and
there exist positive constants $C,d$ such that $\|(\Gamma_k)_\rho\|_{\mathcal{M}_p(\R)}\leq Ck^d$
for all $k\geq 1$.
\end{lemma}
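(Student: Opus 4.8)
The plan is to check that $(\Gamma_k)_\rho$ satisfies the one–dimensional Mihlin--H\"ormander multiplier conditions with a constant that grows at most polynomially in $k$. On $\R$ the classical scalar Mihlin theorem only requires control of $\sup_{\lambda\in\R}|m(\lambda)|$ and of $\sup_{\lambda\in\R}|\lambda|\,|m'(\lambda)|$, and produces a multiplier norm bounded by $C_p$ times the larger of these two quantities, with $C_p$ independent of $m$. So it is enough to bound both quantities for $m=(\Gamma_k)_\rho$ by a multiple of $(1+k)^d$; since $(1+k)^d\leq 2^d k^d$ for $k\geq 1$, this yields the asserted estimate $\|(\Gamma_k)_\rho\|_{\mathcal{M}_p(\R)}\leq Ck^d$ for all $p\in(1,\infty)$.

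For the sup-norm bound I would fix a small $\varepsilon>0$ and apply the Gangolli estimate \eqref{eqn.Gangolli.est} on the cone $D=D_\varepsilon=\{\xi+i\eta:\eta\geq-\varepsilon|\xi|\}$, which avoids the poles of $\Gamma_k$ (these lie on $-i\N$). Thus $\Gamma_k$ is holomorphic on $D$ with $|\Gamma_k|\leq K(1+k)^d$ there. Since the whole line $\{\lambda+i\rho:\lambda\in\R\}$ lies in the open upper half plane, hence in the interior of $D$ (here $\rho=\alpha+\beta+1>0$), the boundary value $(\Gamma_k)_\rho(\lambda)=\Gamma_k(\lambda+i\rho)$ is an honest restriction of a holomorphic function, is smooth on $\R$, and satisfies $\|(\Gamma_k)_\rho\|_\infty\leq K(1+k)^d$.

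The derivative estimate is where the opening of the cone $D$ is exploited. The key geometric observation is that there is a constant $c=c(\varepsilon,\rho)>0$ such that the disc $B\bigl(\lambda_0+i\rho,\,c(1+|\lambda_0|)\bigr)$ is contained in $D$ for \emph{every} $\lambda_0\in\R$: the lowest point of such a disc has imaginary part $\rho-c(1+|\lambda_0|)$, its real part has modulus at least $|\lambda_0|-c(1+|\lambda_0|)$, and for $c$ small enough the constraint $\eta\geq-\varepsilon|\xi|$ is then satisfied throughout the disc, because $D$ dips below the real axis by an amount proportional to $|\xi|$. Cauchy's inequality then gives $|(\Gamma_k)_\rho'(\lambda_0)|=|\Gamma_k'(\lambda_0+i\rho)|\leq K(1+k)^d/(c(1+|\lambda_0|))$, so that $\sup_{\lambda\in\R}(1+|\lambda|)\,|(\Gamma_k)_\rho'(\lambda)|\leq c^{-1}K(1+k)^d$; the same argument with the $j$-th Cauchy estimate bounds $|\lambda|^{j}|(\Gamma_k)_\rho^{(j)}(\lambda)|$ by $j!\,c^{-j}K(1+k)^d$, so in fact Mihlin conditions of all orders hold with polynomially growing constants, although only the first order is needed on $\R$. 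Combining the last two paragraphs with the scalar Mihlin theorem finishes the proof. The only delicate point is the geometric verification that the enlarged discs remain inside the Gangolli domain $D$; everything else is a routine application of Cauchy's estimate and of the classical multiplier theorem, so I do not anticipate a serious obstacle — this is exactly the mechanism underlying \cite[Lemma~3.3]{Giulini-Mauceri-Meda.Crelle}, transcribed to the coefficients $\Gamma_k$ of the Jacobi Harish--Chandra expansion.
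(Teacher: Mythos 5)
Your proof is correct and follows the same overall architecture as the paper's: a sup bound on $\Gamma_k$ over a cone avoiding the poles, a Cauchy estimate on discs centred at $\lambda_0+i\rho$ of radius comparable to $1+|\lambda_0|$ to obtain $|(\Gamma_k)_\rho'(\lambda_0)|\lesssim k^d(1+|\lambda_0|)^{-1}$, and then the one-dimensional Mihlin--H\"ormander theorem. The one genuine difference is which Gangolli-type bound is invoked. The paper asserts that the standard estimates \eqref{eqn.Gangolli.est} ``do not suffice'' and appeals to the improved bound \eqref{eqn.improved.Gangolli} on the wide cone $\{|\mathrm{Im}\,\lambda|\leq|\mathrm{Re}\,\lambda|\}$, which must be re-proved following \cite[Lemma~3.3]{Giulini-Mauceri-Meda.Crelle}; this is forced by its choice of Cauchy circles of radius $|\lambda|/\sqrt{2}$, which dip far below the real axis. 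You instead shrink the radius to $c(1+|\lambda_0|)$ with $c$ small relative to $\varepsilon$ and $\rho$, so that the discs stay inside the narrow cone $D_\varepsilon=\{\eta\geq-\varepsilon|\xi|\}$ on which the \emph{standard} estimates \eqref{eqn.Gangolli.est} already give a bound uniform in $\lambda$ and polynomial in $k$; your geometric verification that these discs lie in $D_\varepsilon$ is sound, and since the radius is still $\asymp 1+|\lambda_0|$, Cauchy's inequality yields the same derivative decay. This buys you a shorter proof that bypasses the improved Gangolli estimates entirely, at the cost of a slightly more careful (but elementary) disc-in-cone computation; it does rely on reading the hypothesis ``for some $\varepsilon\geq 0$'' in the Gangolli lemma as permitting a fixed $\varepsilon>0$, which is how the paper itself uses that lemma (e.g.\ for the uniform convergence of the Harish-Chandra series) and is consistent with \cite[Lemma~7]{FJ}.
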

\begin{proof}
As in \cite{Giulini-Mauceri-Meda.Crelle}, we prove the lemma by means of the Mikhlin multiplier
theorem on $\R$. To this end we need a good uniform bound on the derivatives of $(\Gamma_k)_\rho$. The areforementioned standard Gangolli estimates  do not suffice, but it can be proved as in \cite[Lemma~3.3]{Giulini-Mauceri-Meda.Crelle} that
\begin{equation}\label{eqn.improved.Gangolli}
\sup_{\{\vert\mathrm{Im}\,\lambda\vert\leq\vert\mathrm{Re}\,\lambda\vert\}}\vert\Gamma_k(\lambda)\vert\lesssim k^d
\end{equation}
for a suitable constant $d$. The reader will have no trouble in repeating the proof, using that the root multiplicities $m_\alpha$ and $m_{2\alpha}$ (symmetric space parameters) are replaced by $2(\alpha-\beta)$ and $2\alpha+1$ (with $\alpha,\beta$ being Jacobi parameters), respectively.
\smallskip

Consider the
region $U=\{z\in\C\,\:\,\vert\mathrm{Im}\,(z-i\rho)\vert\leq\vert\mathrm{Re}\,(z-i\rho)\vert\}$,
together with the circle $\gamma:t\mapsto\frac{\vert\lambda\vert}{\sqrt{2}}e^{it}+(\lambda+i\rho)$, $t\in[0,2\pi]$, with center in $\lambda+i\rho$ and radius
$\frac{1}{\sqrt{2}}\vert\lambda\vert$ (which is completely contained in the inner of $U$). An application of the Cauchy Integral Formula together with the improved Gangolli estimates \eqref{eqn.improved.Gangolli}
yields the estimate
\begin{multline*}
\biggl|\frac{d\Gamma_k}{d\lambda}(\lambda+i\rho)\biggr|=\biggl|\frac{1}{2\pi
i}\int_\gamma\frac{\Gamma_k(z)}{(z-(\lambda+i\rho))^2}dz\biggr|
\\=\frac{1}{2\pi}\biggl|\int_0^{2\pi}\frac{\Gamma_k(\gamma(t))}{\bigl(\frac{\vert\lambda\vert}{\sqrt{2}}e^{it}\bigr)^2}\frac{\vert\lambda\vert}{\sqrt{2}}\,dt\biggr|\lesssim \frac{1}{\vert\lambda\vert}\int_0^{2\pi}\vert\Gamma_k(\gamma(t))\vert\,dt\lesssim \frac{k^d}{\vert\lambda\vert}.
\end{multline*}
The classical Mikhlin--H\"ormander multiplier theorem on $\R$ finishes the proof.
\end{proof}

\begin{proposition}\label{prop.global}
Let $m$ be an even function that is bounded and holomorphic on $\Omega_1$, and assume that $M$ and
$M_\rho$ are both in $\mathcal{M}_p(\R)$.  Then $(1-\psi)k$ is an $L^p$-multiplier for the Jacobi
transform with multiplier norm dominated by the sum of the multiplier norms of $M$ and $M_\rho$.
\end{proposition}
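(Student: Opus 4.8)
The plan is to insert the Harish-Chandra expansion $\varphi_\lambda(t)=\cfct(\lambda)e^{(i\lambda-\rho)t}\phi_\lambda(t)+\cfct(-\lambda)e^{(-i\lambda-\rho)t}\phi_{-\lambda}(t)$ into the inversion formula for $k=m^\vee$, so that the global kernel becomes
\[
(1-\psi)(t)k(t)=(1-\psi)(t)\int_0^\infty m(\lambda)\bigl[\cfct(\lambda)e^{(i\lambda-\rho)t}\phi_\lambda(t)+\cfct(-\lambda)e^{(-i\lambda-\rho)t}\phi_{-\lambda}(t)\bigr]\,d\nu(\lambda).
\]
Using that $m$ is even and $d\nu(\lambda)=(2\pi)^{-1/2}\vert\cfct(\lambda)\vert^{-2}\,d\lambda=(2\pi)^{-1/2}\cfct(\lambda)^{-1}\cfct(-\lambda)^{-1}\,d\lambda$, the two terms combine to a single integral over $\R$ in which one factor $\cfct(\pm\lambda)$ cancels against the density; inserting also the Gangolli series $\phi_\lambda(t)=\sum_k\Gamma_k(\lambda)e^{-2kt}$ and recalling $M(\lambda)=m(\lambda)\cfct(-\lambda)^{-1}$ and $e^{(i\lambda-\rho)t}e^{-2kt}=e^{i\lambda t}e^{-(2k+\rho)t}$, one arrives at an expression of the schematic form
\[
(1-\psi)(t)k(t)=(1-\psi)(t)\sum_{k=0}^\infty e^{-(2k+\rho)t}\int_\R M(\lambda)\Gamma_k(\lambda)e^{i\lambda t}\,d\lambda=(1-\psi)(t)\sum_{k=0}^\infty e^{-(2k+\rho)t}\bigl(M\Gamma_k\bigr)^{\vee}_{\mathrm{eucl}}(t),
\]
where $(\cdot)^\vee_{\mathrm{eucl}}$ is the Euclidean inverse Fourier transform. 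This is exactly the decomposition used in \cite{Giulini-Mauceri-Meda.Crelle} and \cite{Astengo}, and the point of the cancellation is that the pole-free modified multiplier $M$ and the entire functions $\Gamma_k$ appear, rather than $m$ and $\cfct$ individually.

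Next I would justify shifting the contour of the $\lambda$-integral from $\R$ to the line $\{\lambda+i\rho:\lambda\in\R\}$, the upper edge of $\Omega_1$. Since $M$ is holomorphic and bounded on $\Omega_1$ (this is where the hypothesis that $\omega m$, equivalently $m\cfct(-\lambda)^{-1}$, is bounded on the strip enters, via the $\cfct$-function estimate \eqref{eqn.c-fct.est}), and $\Gamma_k$ is holomorphic on $\{\mathrm{Im}\,\lambda>-\rho\}$ with the improved Gangolli bound \eqref{eqn.improved.Gangolli}, Cauchy's theorem applies; the shift produces a factor $e^{-\rho t}$ from $e^{i(\lambda+i\rho)t}$, which combines with $e^{-(2k+\rho)t}$ to give $e^{-(2k+2\rho)t}$, genuinely decaying since $t\geq R_0^{1/2}>0$ on the support of $1-\psi$. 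After the shift the inner integral is precisely the Euclidean inverse Fourier transform of the boundary multiplier $M_\rho(\Gamma_k)_\rho$. Thus
\[
(1-\psi)(t)k(t)=(1-\psi)(t)\sum_{k=0}^\infty e^{-(2k+2\rho)t}\bigl(M_\rho(\Gamma_k)_\rho\bigr)^\vee_{\mathrm{eucl}}(t).
\]

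Now I would run the transference argument term by term, exactly as for the pieces $K_{j,k}$ in the proof of Proposition \ref{prop.local}. For each fixed $k$, the product $M_\rho(\Gamma_k)_\rho$ lies in $\mathcal{M}_p(\R)$: $M_\rho\in\mathcal{M}_p(\R)$ by hypothesis and $(\Gamma_k)_\rho\in\mathcal{M}_p(\R)$ with $\|(\Gamma_k)_\rho\|_{\mathcal{M}_p(\R)}\leq Ck^d$ by Lemma \ref{lemma.gangolli-new}, and $\mathcal{M}_p(\R)$ is a Banach algebra under pointwise multiplication, so $\|M_\rho(\Gamma_k)_\rho\|_{\mathcal{M}_p(\R)}\lesssim k^d\|M_\rho\|_{\mathcal{M}_p(\R)}$. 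Multiplication by the bounded, compactly-supported-away-from-$0$ smooth factor $(1-\psi)(t)e^{-(2k+2\rho)t}$ and an application of the transference principle from \cite[Theorem~4.1 and Corollaries]{Gigante} (whose hypotheses are met because each summand is $\mu$-integrable, the weight $e^{-(2k+2\rho)t}$ killing the exponential growth of $\Delta(t)$) show that the $k$-th summand is in $\mathcal{CO}_p$ with
\[
\bigl\|(1-\psi)e^{-(2k+2\rho)\cdot}\bigl(M_\rho(\Gamma_k)_\rho\bigr)^\vee_{\mathrm{eucl}}\bigr\|_{\mathcal{CO}_p}\lesssim k^d e^{-2kR_0^{1/2}}\bigl(\|M_\rho\|_{\mathcal{M}_p(\R)}+\|M\|_{\mathcal{M}_p(\R)}\bigr).
\]
The factor $e^{-2kR_0^{1/2}}$, coming from the support of $1-\psi$, beats the polynomial growth $k^d$, so summing over $k$ yields a convergent series and the claimed bound $\|(1-\psi)k\|_{\mathcal{M}_p}\lesssim\|M\|_{\mathcal{M}_p(\R)}+\|M_\rho\|_{\mathcal{M}_p(\R)}$.

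The main obstacle I anticipate is making the contour shift and the term-by-term splitting fully rigorous simultaneously: one must interchange the $k$-sum, the $\lambda$-integral, and the contour deformation, which requires the uniform-in-$k$ bounds \eqref{eqn.improved.Gangolli} and the uniform convergence of $\sum_k\Gamma_k(\lambda)e^{-2kt}$ on $[R_0^{1/2},\infty)\times\{\mathrm{Im}\,\lambda\in[0,\rho]\}$, together with enough decay of $M$ on the closed strip to license Cauchy's theorem on truncated rectangles and control the vertical sides. Handling the boundary values $M_\rho$, $(\Gamma_k)_\rho$ as nontangential limits (rather than honest traces) when passing to $\mathcal{M}_p(\R)$ is the other delicate point, but this is exactly the situation addressed in \cite{Giulini-Mauceri-Meda.Crelle}, and the Fatou-type argument recalled before Proposition \ref{prop.local} supplies what is needed; the remaining estimates are routine adaptations of that reference with $m_\alpha,m_{2\alpha}$ replaced by $2(\alpha-\beta),2\alpha+1$.
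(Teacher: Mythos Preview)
Your overall architecture matches the paper's: insert the Harish--Chandra series, shift the contour to the edge of $\Omega_1$, and control each piece via Lemma \ref{lemma.gangolli-new}. But two genuine gaps remain.

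\textbf{The $k=0$ term.} After your contour shift the leading term is $(1-\psi)(t)e^{-2\rho t}\mathcal{F}(M_\rho)(t)$. Here the would-be damping factor $e^{-2\rho t}$ is \emph{exactly} cancelled by the growth of $\Delta(t)\sim e^{2\rho t}$, so there is no $e^{-2kR_0^{1/2}}$ gain, the summand is \emph{not} $\mu$-integrable, and your uniform estimate simply fails at $k=0$. The paper isolates this term and treats it separately (following \cite{Astengo}): one writes the relevant cut-off as $1+\eta_\pm$ with $\eta_\pm\in A_p(\R)$, and the ``$1$'' part is handled \emph{before} the contour shift, producing the kernel $\mathcal{F}(M)$ rather than $\mathcal{F}(M_\rho)$. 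This is precisely where the hypothesis $M\in\mathcal{M}_p(\R)$ (not just $M_\rho\in\mathcal{M}_p(\R)$) is actually used; in your write-up the $\|M\|_{\mathcal{M}_p(\R)}$ contribution appears in the final inequality without any mechanism generating it.

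\textbf{The passage from ``$M_\rho(\Gamma_k)_\rho\in\mathcal{M}_p(\R)$'' to a $\mathcal{CO}_p$ bound with factor $e^{-2kR_0^{1/2}}$.} Transference (\cite{Gigante}) bounds the Jacobi convolutor norm of a kernel $f$ by the \emph{Euclidean} convolutor norm of $\Delta f$, so you must estimate $\|\,\Delta(t)(1-\psi)(t)e^{-(2k+2\rho)t}\mathcal{F}(M_\rho(\Gamma_k)_\rho)\,\|_{\mathcal{CO}_p(\R)}$. Pointwise smallness of the prefactor does not by itself control a convolutor norm; one needs that $\mathcal{CO}_p(\R)$ is an $A_p(\R)$-module and that the prefactors lie in $A_p(\R)$ with summable norms. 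The paper obtains this via $\|a\|_{A_p}\leq\|a\|_{A_2}=\|\mathcal{F}a\|_{L^1}\lesssim\|a\|_{L^2}+\|a'\|_{L^2}$ and explicit $L^2$ computations. A further complication, absent from your sketch, is that for non-half-integer $\alpha,\beta$ the density carries an extra factor $\delta_{\alpha,\beta}(t)=(1-e^{-2t})^{\langle\alpha\rangle}(1+e^{-2t})^{\langle\beta\rangle}$ whose derivative is not obviously in $L^2$; the paper has to argue this carefully (and notes it has no analogue in \cite{Giulini-Mauceri-Meda.Crelle}, \cite{Astengo}). Without these ingredients the claimed term-by-term bound is unjustified.
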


\begin{proof}
We may assume without loss of generality that $M$ and $M_\rho$ are rapidly decreasing, cf. Remark \ref{remark.approx}. Let
$K(t)=(1-\psi(t))k(t)\Delta(t)$. We shall use the principle of transference (\cite[Corollary~4.11, 4.12]{Gigante}) to infer that $(1-\psi)k$ is an $L^p$-multiplier for the Jacobi transform whenever $K$ is an $L^p$-multiplier for the Euclidean Fourier transform on $\R$. The strategy will be to insert the Harish-Chandra series for $\varphi_\lambda$ in the definition of $k(t)$, use a series expansion for $\Delta(t)$, and then analyze the various pieces individually.

As for $\Delta(t)$, observe that
\[\begin{split}
\Delta(t)&=(e^t-e^{-t})^{2\alpha+1}(e^t+e^{-t})^{2\beta+2}=e^{2\rho t}(1-e^{-2t})^{2\alpha+1}(1+e^{-2t})^{2\beta+1}\\
&=e^{2\rho t}\biggl\{\sum_{j=0}^{[[\alpha]]+[[\beta]]}c_je^{-2jt}\biggr\}(1-e^{-2t})^{\left<\alpha\right>}(1+e^{-2t})^{\left<\beta\right>}=:e^{2\rho t}\sum_{j=0}^{[[\alpha]]+[[\beta]]}c_j\delta_{\alpha,\beta}(t)e^{-2jt},
\end{split}\]
for suitable constants $c_j$. Here $[[\alpha]]$ and $\left<\alpha\right>$ denote the integer and the decimal part of $2\alpha+1$, respectively. Note that $[[\alpha]]+[[\beta]]=[[2\rho]]$.

Moreover (by the inversion formula for the even function $m$)
\[\begin{split}
m^\vee(t)&=\int_\R m(\lambda)\cfct(-\lambda)^{-1}e^{(i\lambda-\rho)t}\phi_\lambda(t)\,d\lambda = e^{-\rho t}\int_\R M(\lambda)\phi_\lambda(t)e^{i\lambda t}\,d\lambda\\
&=e^{-\rho t}\sum_{k=0}^\infty e^{-2kt}\int_\R M(\lambda)\Gamma_k(\lambda)e^{i\lambda t}\,d\lambda,
\end{split}\]
since the Harish-Chandra series converges uniformly in a suitable set of $\lambda$, implying the following expansion formula for $K(t)$:
\begin{equation}\label{eqn.formula-for-K}
\begin{split}
K(t) &=(1-\psi(t))e^{\rho t}\delta_{\alpha,\beta}(t)\sum_{j=0}^{[[2\rho]]}c_je^{-2jt}\sum_{k=0}^\infty\int_\R M(\lambda)\Gamma_k(\lambda)e^{i\lambda t}\,d\lambda
\\ &= (1-\psi(t))e^{\rho t}\delta_{\alpha,\beta}(t)\sum_{\ell=0}^\infty e^{-2\ell t}\sum_{j=0}^{[[2\rho]]} c_j\int_\R M(\lambda)\Gamma_{\ell-j}(\lambda)e^{i\lambda t}\,d\lambda,
\end{split}
\end{equation}
where $\Gamma_k\equiv 0$ for $k<0$ by convention (notice the index shift in the summation).
Define $a_l^+(t)=(1-\psi(t))e^{-2lt}\delta_{\alpha,\beta}(t)1_{[0,\infty)}(t)$ and $a_l^-(t)=(1-\psi(t))e^{2lt}\delta_{\alpha,\beta}(t)1_{(-\infty,0]}(t)$, both viewed as even functions on $\R$, and define (in analogy with \cite{Giulini-Mauceri-Meda.Crelle}) functions
\[b_j^\pm(t)=\int_\R M(\lambda)\Gamma_j(\lambda)e^{\pm(i\lambda+\rho)t}\,d\lambda, t\in\R, j\in\N_0,\text{ and } K_{\ell,j}(t)=a_\ell^-(t)b_{\ell-j}^-(t)+a_\ell^+(t)b_{\ell-j}^+(t).\]
A quick calculation establishes that $K=\sum_{\ell=0}^\infty\sum_{j=0}^{[[2\rho]]}c_jK_{\ell,j}$, and we now proceed to examine the individual $K_{\ell,j}$. The technique will be to view the integral defining $b_j^\pm$ as a path integral and then shift the contour of integration towards the upper edge of the strip $\Omega_1$. See Figure \ref{fig.shift}.

\begin{figure}[h]
 \includegraphics{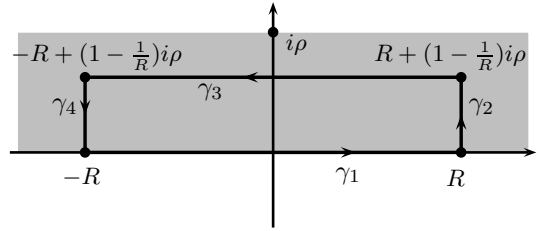}
 \vspace{-5mm}
 \caption{Change of contour-of-integration within (part of) $\Omega_1$ (in gray)}\label{fig.shift}.
\end{figure}

Set $g_\pm(\lambda)=M(\lambda)\Gamma_k(\lambda)e^{\pm(i\lambda+\rho)t}$ for fixed $k\in\N_0$, $t>0$, and parametrize the vertical segment $\gamma_2$ by $\gamma_2(s)=R+i\rho\bigl(1-\frac{1}{R}\bigr)s, s\in[0,1]$. The  $g_\pm$ are holomorphic in $\Omega_1$ and decrease rapidly as $\vert\mathrm{Re }\lambda\vert\to\infty$ with $\lambda\in\Omega_1$. Since
\begin{multline*}
\int_{\gamma_2}g_\pm(\lambda)\,d\lambda = i\rho\bigl(1-\tfrac{1}{R}\bigr)\int_0^1M\Bigl(R+i\rho\bigl(1-\tfrac{1}{R}\bigr)s\Bigr)\Gamma_k\Bigl(R+i\rho\bigl(1-\tfrac{1}{R}\bigr)s\Bigr) e^{\pm(i(R+i\rho(1-\tfrac{1}{R})s)+\rho)t}\,ds\\ =i\rho\bigl(1-\tfrac{1}{R}\bigr)e^{\pm(iR+\rho)t} \int_0^1 M\Bigl(R+i\rho\bigl(1-\tfrac{1}{R}\bigr)s\Bigr)\Gamma_k\Bigl(R+i\rho\bigl(1-\tfrac{1}{R}\bigr)s\Bigr) e^{\mp\rho(1-\tfrac{1}{R})st}\,ds
\end{multline*}
with $\bigl|\mathrm{Im}\bigl(R+i\rho(1-\frac{1}{R})s\bigr)\bigr|<\vert\rho\vert$ and $\mathrm{Re}\bigl(R+i\rho(1-\frac{1}{R})s\bigr)=R$, it follows from the improved Gangolli estimates \ref{lemma.gangolli-new} that $\bigl|\Gamma_k\bigl(R+i\rho(1-\frac{1}{R})s\bigr)\bigr|\lesssim k^d$ uniformly in $R$ (as long as $R\geq\vert\rho\vert$). Hence
\[\biggl|\int_{\gamma_2}g_\pm(\lambda)\,d\lambda\biggr|\lesssim \int_0^1\bigl| M\bigl(R+i\rho\bigl(1-\tfrac{1}{R}\bigr)s\bigr)\bigr| e^{\mp\rho(1-\frac{1}{R})st}\,dt\to 0\text{ as } R\to\infty,\] since $\vert M(z)\vert$ is rapidly decreasing in $\Omega_1$ as $\vert\mathrm{Re} z\vert\to\infty$. An analogous investigation shows that also $\bigl|\int_{\gamma_4}g_\pm(\lambda)\,d\lambda\bigr|\to 0$ as $R\to\infty$.

Parametrize the horizontal segment $\gamma_3$ by $\gamma_3(s)=\bigl(1-\frac{1}{R}\bigr)i\rho-s$, $s\in[-R,R]$. Then
\[\begin{split}
\int_{\gamma_3} g_+(\lambda)\,d\lambda
&= -\int_{-R}^R M\Bigl(\bigl(1-\tfrac{1}{R}\bigr)i\rho-s\Bigr)\Gamma_k\Bigl(\bigl(1-\tfrac{1}{R}\bigr)i\rho-s\Bigr) e^{(i((1-\frac{1}{R})i\rho-s)+\rho)t}\,ds\\
&=-\int_{-R}^RM\Bigl(\bigl(1-\tfrac{1}{R}\bigr)i\rho-s\Bigr)\Gamma_k\Bigl(\bigl(1-\tfrac{1}{R}\bigr)i\rho-s\Bigr)e^{\frac{1}{R}\rho t}e^{-its}\,ds\\
&\to -\int_\R M(i\rho-s)\Gamma_k(i\rho-s)e^{-its}\,ds\text{ as } R\to\infty\\
&=-\mathcal{F}(H_k)(t),
\end{split}\]
where $H_k(s)=M(i\rho-s)\Gamma_k(i\rho-s)$. Moreover
\[\begin{split}
\int_{\gamma_3}g_-(\lambda)\,d\lambda &=-\int_{-R}^RM\Bigl(\bigl(1-\tfrac{1}{R}\bigr)i\rho-s\Bigr)\Gamma_k\Bigl(\bigl(1-\tfrac{1}{R}\bigr)i\rho-s\Bigr) e^{-(i((1-\frac{1}{R})i\rho-s)+\rho)t}\,ds\\ &= -\int_{-R}^R M\Bigl(\bigl(1-\tfrac{1}{R}\bigr)i\rho-s\Bigr)\Gamma_k\Bigl(\bigl(1-\tfrac{1}{R}\bigr)i\rho-s\Bigr) e^{-\frac{1}{R}i\rho t}e^{its}\,ds\\
&=-\int_{-R}^R M\Bigl(\bigl(1-\tfrac{1}{R}\bigr)i\rho+s\Bigr)\Gamma_k\Bigl(\bigl(1-\tfrac{1}{R}\bigr)i\rho+s\Bigr)e^{-\frac{1}{R}\rho t}e^{-its}\,ds\\
&\to -\int_\R (M\Gamma_k)_\rho(s) e^{-its}\,ds \text{ as } R\to\infty\\
&=-\mathcal{F}((M\Gamma_k)_\rho)(t).
\end{split}\]
In other words
\begin{equation}\label{eqn.Klj}
K_{\ell,j}(t)=a_\ell^-(t)\mathcal{F}((M\Gamma_{\ell-j})_\rho)(t)+a_\ell^+(t)\mathcal{F}(H_{\ell-j})(t),
\end{equation}
where $H_{\ell-j}(t)=M(i\rho-t)\Gamma_{\ell-j}(i\rho-t)$, just as on the bottom of \cite[page~171]{Giulini-Mauceri-Meda.Crelle}.

Assuming $\ell>0$, it follows as on page 172 in \cite{Giulini-Mauceri-Meda.Crelle} that $\|a_\ell^-\|_{A_p(\R)}\leq\|a_\ell^-\|_{A_2(\R)}=\|\mathcal{F}(a_\ell^-)\|_{L^1(\R)}$. Since the $a_\ell^-$ are compactly supported, the Sobolev embedding theorem implies the estimate \[\|\mathcal{F}(a_\ell^-)\|_{L^1(\R)}\lesssim\|a_\ell^-\|_{L^\infty(\R)}\lesssim \|a_\ell^-\|_{L^2(\R)}+\|(a_\ell^-)'\|_{L^2(\R)}.\] Note that by choice of $\psi$, $1-\psi(t)\equiv 0$ for $\vert t\vert\leq R_0^{1/2}$ and $1-\psi(t)\equiv 1$ for $\vert t\vert\geq R_0>R_0^{1/2}$.

A favourable estimate for $\|a_\ell^-\|$ is obtained just as in \cite{Giulini-Mauceri-Meda.Crelle} by direct calculation:
\[\begin{split}
\|a_\ell^-\|_{L^2(\R)}^2&=\int_\R (1-\psi(t))^2e^{4\ell t}1_{(-\infty,0]}(t)\vert\delta_{\alpha,\beta}(t)\vert^2\,dt\\
&=\int_{-\infty}^0(1-\psi(t))^2\delta_{\alpha,\beta}(t)^2 e^{4\ell t}\,dt=\int_{\text{supp}(1-\psi)\cap(-\infty,0]}\delta_{\alpha,\beta}(t)^2e^{4\ell t}\,dt\\
&\lesssim\int_{-\infty}^{-R_0}\delta_{\alpha,\beta}(t)^2e^{4\ell t}\,dt\lesssim\int_{-\infty}^{-R_0}e^{4\ell t}\,dt\lesssim e^{-4\ell R_0}.
\end{split}\]
The estimate for $(a_\ell^-)'$ has no analogue in \cite{Giulini-Mauceri-Meda.Crelle}, \cite{Astengo} since the factor $\delta_{\alpha,\beta}$ is non-constant exactly when $\alpha,\beta$ are not half-integers. Its derivative must therefore be more carefully estimated. The issue is easily explained: as
\begin{multline*}
\delta_{\alpha,\beta}'(t)=2\bra(1-e^{-2t})^{\bra-1}te^{-2t}(1+e^{-2t})^{\brb}\\
-2\brb(1-e^{-2t})^{\bra}te^{-2t}(1+e^{-2t})^{\brb-1},\end{multline*}
where $-1\leq\bra-1,\brb-1<0$, the function $(1-\psi)\delta'_{\alpha,\beta}$ might not be in $L^2$. But we have an additional exponential factor of $e^{2\ell t}$ in the definition of $a_\ell^-$ that will do the trick if we estimate more carefully. To this end introduce the auxiliary function $f(t)=e^{2\ell t}\delta_{\alpha,\beta}(t)$. Then
\begin{multline*}
f'(t)=2\ell e^{2\ell t}\delta_{\alpha,\beta}(t)+2\bra(1-e^{-2t})^{\bra-1}e^{2(\ell-1)t}(1+e^{-2t})^{\brb}\\
-2\brb(1-e^{-2t})^{\bra}e^{2(\ell-1)t}(1+e^{-2t})^{\brb-1},
\end{multline*}
where
$\| \ell e^{2\ell\cdot}(1-\psi)\delta_{\alpha,\beta}\|_{L^2(\R)}^2\lesssim \ell^2\int_{-\infty}^{-R_0}e^{4\ell t}\,dt\lesssim \ell e^{-4\ell R_0}$,
which is still fine. The remaining two terms in the above expression for $f'(t)$ obviously satisfy the same type of $L^2$-estimates, except possibly when $\ell=1$, so let us assume $\ell=1$. In this case $e^{2t}\delta'_{\alpha,\beta}(t)$ behaves roughly as $e^{-2t(\bra+\brb-1)}$ for $-\infty<t\ll -R_0<0$. Since $\bra+\brb-1<1$, it follows from the presence of the mitigating additional mitigating term $2e^{2t}\delta_{\alpha,\beta}(t)$ in the expression for $f'(t)$ that there exists some positive constant $c$ such that $f'(t)\lesssim e^{-ct}$ for $-\infty<t\ll -R_0<0$, whence
$\|(a_1^-)'\|_{L^2(\R)}^2\lesssim \int_{-\infty}^{-R_0} e^{-2ct}\,dt\lesssim e^{-2cR_0}$,
with a similar bound for $\|(a_\ell^-)'\|_{L^2(\R)}$ when $\ell\geq 2$. In conclusion it has thereby been shown that
\[\sum_{\ell=1}^\infty \|a_\ell^-\|_{W^{1,1}(\R)}\leq \sum_{\ell=1}^\infty \left(\|a_\ell^-\|_{L^2(\R)}+\|(a_\ell^-)'\|_{L^2(\R)}\right)<\infty.\] The considerations for $a_\ell^+$ are similar so we shall not repeat the argument.

 By Lemma \ref{lemma.gangolli-new} it now follows that $\|K_{\ell,j}\|_{\mathcal{CO}_p(\R)}\lesssim\|a_\ell^-\|_{A_p(\R)} \|(M\Gamma_{\ell-j})_\rho\|_{\mathcal{M}_p(\R)}$, whence
 \[\biggl\|\sum_{\ell=1}^\infty\sum_{j=0}^{[[2\rho]]} c_jK_{\ell,j}\biggr\|\leq \biggl(\sum_{\ell=1}^\infty\|a_\ell^-\|_{W^{1,1}(\R)}\biggr)\|M_\rho\|_{\mathcal{M}_p(\R)} \lesssim \|M_\rho\|_{\mathcal{M}_p(\R)},\]
which is finite by assumption.
\medskip

It remains to consider the case $\ell=0$, in which case $j=0$ as well. At this point we follow the argument on page 161 in \cite{Astengo} and introduce functions \[\eta_\pm(t)=[(1-\psi(t))1_{[0,\infty)}(\pm t)-1]\delta_{\alpha,\beta}(t)e^{\mp\rho t};\] then
\[\begin{split}
K_{0,0}(t)&= a_0^-(t)b_0^-(t)+a_0^+(t)b_0^+(t)\\
&=(1-\psi(t))\delta_{\alpha,\beta}(t)1_{[0,\infty)}(t)b_0^-(t)+(1-\psi(t))\delta_{\alpha,\beta}(t)1_{(-\infty,0]}(t)b_0^+(t)\\
&=b_0^-(t)+(\eta_-(t)-1)b_0^-(t)+b_0^+(t)+(\eta_+(t)-1)b_0^+(t)\\
&=\mathcal{F}(M_\rho)(t)+\eta_-(t)\mathcal{F}(M)+\eta_+(t)\mathcal{F}(H)(t)+\mathcal{F}(H_\rho)(t),
\end{split}\]
where $H(s):=M(-s)$. The first and last summands are precisely the kernels of the multipliers $M_\rho$ and $H_\rho$, respectively. Obviously $H_\rho$ \emph{is} an $L^p$-multiplier since $M_\rho$ is one by assumption and they have the same multiplier norm. Since $\mathcal{CO}_p(\R)$ is an $A_p(\R)$-module we must now simply see that $\eta_\pm$ belong to $A_p(\R)$ but this is established by an analysis similar to the investigation of $a_\ell^\pm$ above. It thus follows that
$\|K_{0,0}\|_{\mathcal{CO}_p(\R)}\lesssim\|M_\rho\|_{\mathcal{M}_p(\R)}+
\|M\|_{\mathcal{M}_p(\R)}$.
\end{proof}
The change-of-contour technique was already used in the proof of
\cite[Proposition~4.5]{Stanton-Tomas}, see also \cite[Proposition~5.1]{Helgason-PW}, although we have altered it slightly to take into account the nontangential boundary value along the upper edge. This point wasn't stressed in \cite{Giulini-Mauceri-Meda.Crelle}, \cite{Astengo}.

There are other differences between the proof given above and the proofs of the analogous statements for rank one symmetric spaces (\cite[Proposition~3.4]{Giulini-Mauceri-Meda.Crelle}) and for Damek--Ricci spaces (\cite[Proposition~4.5]{Astengo}). Most importantly we cannot use the Herz restriction principle since there are no subgroups to which multipliers are restricted. A more technical nuisance is in regards to $\Delta(t)$: The expansion of the function $K$ into the pieces $K_{l,j}$ that was used in \cite{Giulini-Mauceri-Meda.Crelle} and \cite{Astengo} ceases to be valid in the more general setting of Jacobi analysis, since $\alpha$ and $\beta$ are no longer integers. It is insufficient to bound $K$ pointwise by $\vert K(t)\vert\leq \vert H(t)\vert$ for a suitable convolutor $H$ (where $H$ is defined as $K$ but by replacing $\Delta$ with $\sum_{j=0}^{[[2\rho]]+1}c_je^{-2jt}$), our proof is somewhat more complicated. We thank the anonymous referee on a previous version of the paper for having pointed out this problem.
\section{Proof of the Multiplier Theorem}
\begin{proof}
It suffices to prove that $M$ and
$M_\rho$ belong to $\mathcal{M}_p(\R)$, whenever $m$ satisfies the hypotheses of
Theorem \ref{thm.multi-edge}, since the conclusion will then follow from Proposition \ref{prop.local}
and Proposition \ref{prop.global}.

To this end we proceed as in \cite[p.~172--173]{Giulini-Mauceri-Meda.Crelle} and use complex
interpolation. The strategy is to `compress' the strip $\Omega_1$ and keep track of the nontangential boundary values of the modified multipliers along the edges of this compressed strip. More precisely, let $z\in\C$ with $\mathrm{Re}\, z\in [-1,1]$ and denote by $(\omega m)_{z\rho}$ the nontangential boundary value of $\omega m$ along the upper edge of $\widetilde{\Omega}_{z\rho}:=\{\lambda\in\C\,:\,\vert\mathrm{Im}\,\lambda\vert<\vert \mathrm{Re}\, z\vert\rho\}$ if $\mathrm{Re}\, z>0$ and the lower edge thereof if $\mathrm{Re}\, z<0$. This is consistent with the previously defined nontangential boundary values $(\omega m)_\rho$, in the sense that $(\omega m)_{1\cdot \rho}$ is what we previously denoted $(\omega m)_\rho$. Note that $(\omega m)_{\pm 1\cdot\rho}$ belong
to $\mathcal{M}_p(\R)$; for $x=1$ this is just the hypothesis, and for $x=-1$
this is due to the fact that $\omega m$ is even. Their respective kernels $T_{\pm 1}=\mathcal{F}^{-1}((\omega m)_{\pm \rho})$ are thus Euclidean $L^p$-convolutors. For $z\in\C$ with $\mathrm{Re}\, z\in[-1,1]$ consider the
tempered distribution $T_z$ on $\R$ that is given by $T_z=((\omega m)_{z\rho})^\vee$, and
use Euclidean convolution to define an operator $S_z$ by $S_zf=T_z\star f$, $f\in\mathcal{S}(\R)$. Clearly $S_{1+iy}$ and
$S_{-1-iy}$ extend to bounded operators on $L^p(\R)$ for all $y\in\R$, with operator norms
$|||S_{\pm(1+iy)}|||_{L^p(\R)\to L^p(\R)}=|||S_{\pm 1}|||_{L^p(\R)\to L^p(\R)}$. Complex interpolation applied
to the analytic family $\{S_z\}_{\vert\mathrm{Re}\, z\vert<1}$ proves that $S_z$ extends to a bounded operator on $L^p(\R)$ for all
$z\in(-1,1)$, such that $\widehat{T_z}=(\omega m)_{z\rho}$ belongs to $\mathcal{M}_p(\R)$ for the same range $z\in(-1,1)$.
\medskip

It thus remains to show that $m\cfct^{-1}$ and $(m\cfct^{-1})_\rho$ \emph{do} belong to
$\mathcal{M}_p(\R)$. To this end we introduce the function $\mathbf{w}:\Omega_1\to\C$,
$\mathbf{w}(\lambda)=\omega(\lambda)^{-1}\cfct(\lambda)^{-1}$ and assert that $\mathbf{w}$ and $\mathbf{w}_\rho$ satisfy H\"ormander type conditions on $\R$ of arbitrarily high order, hence define Euclidean $L^p$-multipliers. From the identity $\mathbf{w}m\omega=m\cfct^{-1}$ we infer that $m\cfct^{-1}$ and $(m\cfct^{-1})_\rho$ are indeed $L^p$-multipliers, finishing the proof.

As for the H\"ormander type estimates, note, for example, that
\[\biggl|\frac{d\mathbf{w}}{dx}\biggr| \leq \biggl|\frac{\mathbf{w}(x)}{\mathbf{w}(x)^2}\frac{d}{dx}(\cfct(x)^{-1})\biggr| +
\biggl|\frac{\mathbf{w}(x)'}{\mathbf{w}(x)^2} \cfct(x)^{-1}\biggr|
\lesssim \frac{\vert x\vert^{\alpha+\frac{1}{2}+1}\vert x\vert^\alpha}{\vert x\vert^{2\alpha}} + \frac{\vert x\vert^{\alpha+\frac{1}{2}}\vert x\vert^{\alpha-1}}{\vert x\vert^{2\alpha}} \lesssim \vert x\vert^{-\frac{1}{2}}.
\]
Additional derivatives in $x$ will produce additional decay in $\vert x\vert$; we leave the elementary details to the reader.
\end{proof}

\providecommand{\bysame}{\leavevmode\hbox to3em{\hrulefill}\thinspace}
\providecommand{\MR}{\relax\ifhmode\unskip\space\fi MR }
\providecommand{\MRhref}[2]{%
  \href{http://www.ams.org/mathscinet-getitem?mr=#1}{#2}
}
\providecommand{\href}[2]{#2}

\end{document}